\DeclareMathOperator*{\inflimit}{\underline{\lim}}
\DeclareMathOperator*{\esssup}{ess\,sup}
\DeclareMathOperator{\Tr}{Tr}
\theoremstyle{plain}
\newtheorem{thm}{Theorem}[section]
\newtheorem{lem}[thm]{Lemma}
\newtheorem{prop}[thm]{Proposition}
\newtheorem{assum}{Assumption}
\newtheorem*{note}{Notation}
\theoremstyle{definition}
\newtheorem{defn}{Definition}[section]
\newtheorem{exmp}{Example}[section]
\theoremstyle{remark}
\newtheorem{rem}{Remark}
\begin{document}
\title{\textbf{\huge On path-dependent multidimensional forward-backward SDEs}}
\author{Kaitong HU \footnote{CMAP, Ecole Polytechnique, hukaitong@gmail.com.}
             \and
             Zhenjie Ren \footnote{CEREMADE, Universit\'e Paris-Dauphine, PSL Research University. ren@ceremade.dauphine.fr.} 
	     \and
	     Nizar Touzi \footnote{CMAP, Ecole Polytechnique, nizar.touzi@polytechnique.edu}
}
\date{}
\maketitle{}
\begin{abstract}
	This paper extends the results of Ma, Wu, Zhang, Zhang \cite{ma2015} to the context of path-dependent multidimensional forward-backward stochastic differential equations (FBSDE). By path-dependent we mean that the coefficients of the forward-backward SDE at time $ t $ can depend on the whole path of the forward process up to time $ t $. Such a situation appears when solving path-dependent stochastic control problems by means of variational calculus. At the heart of our analysis is the construction of a decoupling random field on the path space. We first prove the existence and the uniqueness of decoupling field on small time interval. Then by introducing the characteristic BSDE, we show that a global decoupling field can be constructed by patching local solutions together as long as the solution of the characteristic BSDE remains bounded. Finally, we provide a stability result for path-dependent forward-backward SDEs.
\end{abstract}

\paragraph*{Keywords:} Forward-Backward SDE, Backward Stochastic Riccati Equations, decoupling random field, Characteristic BSDE. 

\paragraph*{MSC:} 60H07, 60H30, 35R60, 34F05 

\section{Introduction}
\label{sec:intro}
Forward-backward SDEs appears naturally while solving stochastic control problems by means of variational calculus. Provided that an optimal control exists, the forward component describes the optimal state equation, while the backward component stands for the costate. Motivated by various applications in mathematical finance and their close links with quasi-linear PDEs, the wellposedness of forward-backward SDE (FBSDE) has been extensively studied during the past two decades. A FBSDE is a system of the form
\begin{numcases}
	\mathrm{d}X_{t} = b_{t}(X_t, Y_{t}, Z_{t})\mathrm{d}t + \sigma_{t}(X_t, Y_{t}, Z_{t})\mathrm{d}W_{t} & $ X_{0} = x $, \nonumber \\
	\mathrm{d}Y_{t} = -f_{t}(X_t, Y_{t}, Z_{t})\mathrm{d}t + Z_{t}\mathrm{d}W_{t} & $ Y_{T} = g(X_T) $, \nonumber
\end{numcases}
where $ X $ is called the forward process and $ (Y, Z) $ is called the backward process. The triple $ (X, Y, Z) $ can be multidimensional and the above notations represent in general a system of equations. Besides, the coefficients $ (b,\sigma,f,g) $ of the FBSDE can be random or deterministic. The deterministic coefficient setting is often refered to as the Markovian FBSDE. 

The system is called decoupled if neither $ b $ nor $ \sigma $ depend on $ (Y,Z) $. The decoupled problem reduces to the a Backward SDE as introduced by Pardoux and Peng in \cite{PP}, its wellposedness established under Lipschitz conditions in \cite{PP} has been extended to various situations in the subsequent extensive literature.

In the case of strongly coupled FBSDE, the wellposedness is far more complex and counter-examples under very simple forms can be found: existence of solutions may fail, even locally, and uniqueness may be lost in simple situations, see Example \ref{contreExemple} below. The first wellposedness results are obtained by the so-called four steps scheme in Ma, Protter and Yong \cite{ProtterMY}, under appropriate conditions on the coefficients. The unique global solution is expressed as $Y_t=u(t,X_t)$ and $Z_t=\sigma(t,X_t,u(t,X_t))u_x(t,X_t)$, where the function $u$ is the solution of the quasilinear PDE
\begin{equation}\label{quasiLinearPDE}
\partial_tu+\frac{1}{2}\sigma^2(\cdot,u)u_{xx}+b(\cdot,u,\sigma(\cdot,u)u_{x})u_x+f(\cdot,u,\sigma(\cdot,u)u_x) = 0,~u\big|_{t=T}=g.
\end{equation}
In the non-Markovian FBSDE case, the method of continuation initially introduced by Hu and Peng \cite{Hu1995}, Peng and Wu \cite{PW99}, and later developed by Yong \cite{Yong1997} and recently \cite{Yong2009}, has been widely used in various applications, see e.g. \cite{Yu13, WY2014}. However, the method depends crucially on the monotonicity conditions on the coefficients, which is restrictive in a different way comparing to four steps scheme. Later, using the notion of decoupling field, a general technique has been developed after a series of works of Cvitani{\'c} and Zhang \cite{Cvitanic2005}, Zhang \cite{Zhang2006}, Ma, Wu, Zhang, Zhang \cite{ma2015} and is used to extend the contraction method proposed by Antonelli \cite{antonelli1993}, Pardoux and Tang \cite{PardouxTang} to construct solutions on large intervals by patching together local solutions. 

The decoupling field $u$ can be seen as the non-Markovian substitute to the above quasi-linear PDE. Generally speaking, the decoupling field is a random function such that the solutions of the FBSDE satisfy $:[0,T]\times\mathbb{R}^d\times\Omega$ such that $Y_t=u_t(X_t)$, $t\in[0,T]$.
The key idea is to reduce the FBSDE to the wellposedness problem of the decoupling field. The method was initiated by Ma, Wu \& Zhang \cite{ma2015} in the one-dimensional setting, and further extended by Zhang \cite{Jianfeng02} to the multidimensional case. Fromm and Imkeller in \cite{Fromm2013} redefined the decoupling field using weak derivatives and applied it to general multidimensional FBSDE and defined the notion of maximal interval of a FBSDE.

All of the previous works assume that the possibly random coefficients only depend on the current value of the state $X_t$. Our objective is to allow for an additional possible dependence on the path of the state process, in a progressive way, a situation which arises naturally in path-dependent stochastic control problems which are crucial in various stochastic differential gales as the so-called Principal-Agent problem. 

This paper is largely inspired by the paper \cite{ma2015} and our main contribution is the extension of the existing results to the path-dependent FBSDE defined by path dependent coefficients 
$$
b_{t}(X, Y_{t}, Z_{t}),~\sigma_{t}(X, Y_{t}, Z_{t}),~f_{t}(X, Y_{t}, Z_{t}),
~\mbox{and}~g(X).
$$
By introducing a new metric on the path space (see Assumption \ref{A1}), we first extend the local existence result of \cite{antonelli1993} to path-dependent multidimensional FBSDEs. We then define the characteristic BSDE as in the classical case and by the same spirit of \cite{ma2015, Jianfeng02}, we construct the decoupling field on the path space using the notion of dominating ODE and the comparison principle of quadratic BSDE. Finally we give the stability property of path-dependent FBSDEs.

Note that in the general literature, the FBSDE whose coefficients depend on $ \omega\in\Omega $ are referred to as non-Markovian FBSDE. In order to distinguish our framework with the classical ones and avoid confusion, we call throughout the paper FBSDE whose coefficients depend on $ X_{\cdot\wedge t} $ at time $ t $ path-dependent FBSDE. 

The paper is organized as follows. Section \ref{S2} defines the notion of decoupling field for path-dependent FBSDEs and shows how they lead to the wellposedness of FBSDEs. Section \ref{S3} provides our local existence and uniqueness result for strongly coupled path-dependent FBSDEs. We next show in Section \ref{S4} that such solutions can be made global by analyzing the dynamics of the corresponding decoupling field which leads as in \cite{ma2015} to the wellposedness of some dominating ODE. Finally, Section \ref{S5} contains a stability result for path-dependent multidimensional FBSDEs.

\section{Notations and General Assumptions} \label{S2}
Throughout this paper, we denote $ (\Omega, \mathcal{F}, \mathbb{P}, \mathbb{F}) $ a filtered probability space on which is defined a $ n $ dimensional Brownian motion $ W = (W_{t})_{t\geq0} $. Denote $ \mathbb{F}:=\{\mathcal{F}^{W}_{t}\}_{t\geq0} $, the natural filtration generated by $ W $, augmented by the $ \mathbb{P} $-null sets of $ \mathcal{F} $. For $ t\geq0 $, denote $ \mathbb{H}^2_t(\mathbb{F},\mathbb{R}^n) $ the space of continuous $ \mathbb{F} $-adapted processes on $ [0,t] $ taking values in $ \mathbb{R}^n $ satisfying the integrability condition $\|Y\|_{\mathbb{H}^2_t} := \mathbb{E}\big[ \big(\int_{0}^{t}|Y_s|^2\mathrm{d}s\big)^{\frac{1}{2}} \big]<+\infty$, where $ |\cdot| $ is the Euclidean distance. 

Given $ T>0$, we denote by $\mathcal{C}([0,T],\mathbb{R}^{d})$ the canonical space of continuous paths, that we endow with the norm 
$$ 
\|\mathrm{x}\|^{2}_{2,t} 
:= 
\int_{0}^{t}|\mathrm{x}(s)|^{2}\mathrm{d}s + |\mathrm{x}(t)|^{2},
~\mathrm{x}\in \mathcal{C}([0,T],\mathbb{R}^{d}). 
$$
Let $ \Theta := \mathcal{C}([0,T],\mathbb{R}^{d})\times\mathbb{R}^{n}\times\mathcal{M}_{n}(\mathbb{R}) $, and consider the coefficients
\begin{equation*}
			(b,\sigma,f):[0, T]\times\Omega\times\Theta
			\longrightarrow
			\mathbb{R}^{d}\times\mathcal{M}_{d,n}\times\mathbb{R}^{n}
			\text{, }
			g:\Theta\times\Omega\rightarrow\mathbb{R}^{n}.
		\end{equation*}	
This paper studies the following fully coupled forward-backward stochastic differential equation (abbreviated FBSDE) on $ [0,T] $:
\begin{numcases}{}
\mathrm{d}X_{t} = b_{t}(X, Y_{t}, Z_{t})\mathrm{d}t + \sigma_{t}(X, Y_{t}, Z_{t})\mathrm{d}W_{t} & $ X_{0} = x $, \label{forward} \\
\mathrm{d}Y_{t} = -f_{t}(X, Y_{t}, Z_{t})\mathrm{d}t + Z_{t}\mathrm{d}W_{t} & $ Y_{T} = g(X) $. \label{backward}
\end{numcases}
Throughout the paper, we shall make use of the following standard Lipschitz assumptions.
\begin{assum}\label{A1}
	\begin{enumerate}[label=(\roman*)]
		\item The coefficients $\xi=b,\sigma,f$ are $\mathbb{F} $-progressively measurable for all fixed $(x,y,z)$, and are Lipschitz in the spacial variable: there exists $ K_{0}>0 $ such that  
		\begin{equation}\label{LipschitzCondition}
			|\xi_t(\omega,\theta) - \xi_t(\omega,\theta')|\leq K_{0}\big(\|\mathrm{x}-\mathrm{x}'\|_{2,t}+|y-y'|+|z-z'|\big),
			~\mbox{for all}~ 
			\theta=(\mathrm{x},y,z),\theta' = (\mathrm{x}',y',z'),
		\end{equation}
		uniformly in $ \omega\in\Omega $.
		In particular, denote $ |\nabla_z\sigma|_\infty $ the Lipschitz coefficient of the function $ \sigma $ with respect to $ z\in\mathcal{M}_n(\mathbb{R}) $.
		\item The terminal condition $g$ is jointly measurable, and satisfies the following Lipschitz condition: there exists $ K_{1}>0 $ such that 
		\begin{equation}\label{Lipschitz}
			|g(\omega,\mathrm{x}) - g(\omega\mathrm{x}')|\leq K_{1}\|\mathrm{x}-\mathrm{x}'\|_{2,T},~\mbox{for all}~
			\mathrm{x}, \mathrm{x}'\in\mathcal{C}([0,T],\mathbb{R}^{d}),
			\omega\in\Omega.
		\end{equation}
		\item The coefficients $ \xi^0_t(\omega):=\xi_t(\omega,0) $ for $ \xi=b,f,\sigma,g $ satisfy the integrability condition
		\begin{equation}\label{integrability}
			\mathbb{E}\Big[\Big(\int_{0}^{T}\big(\big|b^0_t\big|+\big|f^0_t\big|\big)\mathrm{d}t\Big)^{2} + \int_{0}^{T}\big|\sigma^0_t\big|^{2}\mathrm{d}t + \big|g^0\big|^2 \Big]<\infty.
		\end{equation}

	\end{enumerate}

\begin{rem}
In the classical literature, Markovian FBSDEs refer to systems whose the coefficients are deterministic and non-Markovian FBSDEs refer to systems with random coefficients, i.e. coefficients depending on the Brownian motion. In both cases, the coefficients of the FBSDE at time $ t $ depend only on the value at time $ t $ of the forward process $ X $. It is worth noting that the aforementioned cases are covered by our setting. 
\end{rem}

\end{assum}

\section{The decoupling field}
A general technique for solving a FBSDE, initiated by Protter, Ma and Yong in \cite{ProtterMY} then further developed by various authors in \cite{MA199759}, \cite{Ma1999}, \cite{PardouxTang}, \cite{Delarue},\cite{Cvitanic2005},\cite{Zhang2006}, \cite{Ma+Yong}, \cite{ma2015}, consists in finding a decoupling function $ u $ such that the $Y-$component of the solution of the FBSDE can be expressed as $Y_t=u(t,X_t)$. In the so-called Markovian case, the function $ u $ is identified with the solution of the quasilinear PDE outlined in the introduction Section \ref{sec:intro}, either in the classical sense or in the viscosity sense. In the case where the coefficients are allowed to be random, the function $ u $ is shown to be the solution of some backward stochastic PDE or is constructed as a random field using localization technique under certain conditions of the coefficients. In this section, we will extend the notion of decoupling field to path-dependent FBSDE. More precisely, we have the following definition of the decoupling field. 
\begin{defn}{\it 
	An $ \mathbb{F} $-progressively measurable random field $ u:[0,T]\times\Omega\times\mathcal{C}([0,T],\mathbb{R}^{d})\rightarrow\mathbb{R}^{n} $ with $ u(T,\omega, \mathrm{x}) = g(\omega, \mathrm{x}) $ is said to be a \textit{decoupling field} of FBSDE \eqref{forward}-\eqref{backward} if there exists a constant $ \delta>0 $ such that, for any $ 0\leq t_{1}<t_{2}\leq T $ with $ t_{2}-t_{1}\leq\delta $ and any $ \mathrm{x}\in \mathbb{H}^2_{t_1}(\mathbb{F},\mathbb{R}^d) $, the FBSDE \eqref{forward}-\eqref{backward} with initial value $ X_{\cdot\wedge t_1} = \mathrm{x} $ and terminal condition $ Y_{t_2} = u(t_{2},\cdot) $ has an unique solution that satisfies
	\begin{equation*}
		Y_{t} = u(t,\omega, X_{\wedge t}) = u(t,\omega,X)\text{,    }t\in[t_1,t_2]\text{, }\mathbb{P}-a.s.
	\end{equation*}
	A decoupling field $ u $ is called regular if it is Lipschitz with respect to $ \mathrm{x} $: there exists $ K>0 $ such that for all $ \mathrm{x},\mathrm{x}'\in\mathcal{C}([0,T],\mathbb{R}^{d}),\forall t\in[0,T] $, we have
	\begin{equation*}
		|u(t,\omega,\mathrm{x})-u(t,\omega,\mathrm{x}')| \leq K\|\mathrm{x}-\mathrm{x}'\|_{2,t},\mathbb{P}-a.s.
	\end{equation*}	
	For notation simplicity, denote $ u_t(X) := u(t,\omega, X) $.}
\end{defn}

Note that the existence of decoupling fields implies the well-posedness of FBSDE over a small time interval. The following result shows the implication of the existence of decoupling fields for the well-posedness of FBSDEs over an arbitrary duration. 
\begin{thm}\label{GlobleExistence}
	Assume that there exists a decoupling field $ u $ for the FBSDE \eqref{forward}-\eqref{backward}. Then, under Assumption \ref{A1},  the FBSDE \eqref{forward}-\eqref{backward} has a unique solution $ (X,Y,Z) $ and 
	\begin{equation*}
	\forall t\in[0,T], Y_{t} = u_t(X).
	\end{equation*}
\end{thm}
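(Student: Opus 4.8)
The strategy is the classical patching argument adapted to the path-dependent setting. The hypothesis gives us a decoupling field $u$ together with its associated constant $\delta>0$: on any subinterval of length at most $\delta$, once the terminal condition at the right endpoint is taken to be $u(t_2,\cdot)$, the FBSDE on that subinterval is uniquely solvable and its $Y$-component is recovered by composing with $u$. The plan is to partition $[0,T]$ into finitely many pieces $0=T_0<T_1<\cdots<T_N=T$ with $T_{i+1}-T_i\le\delta$, and to build the global solution by solving successively \emph{forward in time}: on $[T_0,T_1]$ we start from the given deterministic initial condition $X_0=x$ (a fortiori an element of $\mathbb{H}^2_{T_0}$) and terminal condition $Y_{T_1}=u(T_1,\cdot)$; we obtain $(X,Y,Z)$ on $[T_0,T_1]$, which produces an $\mathbb{H}^2_{T_1}$-path $X_{\cdot\wedge T_1}$; we feed this as the initial condition for the next subinterval $[T_1,T_2]$ with terminal data $Y_{T_2}=u(T_2,\cdot)$; and so on up to $[T_{N-1},T_N]$, where the terminal condition $Y_{T_N}=u(T,\cdot)=g(X)$ is exactly the terminal condition of the original FBSDE \eqref{forward}-\eqref{backward}. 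Concatenating these pieces yields a triple $(X,Y,Z)$ on $[0,T]$.

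Two points need care. First, \textbf{consistency at the junction times}: the concatenated $Y$ must be continuous at each $T_i$, and the concatenated triple must genuinely solve \eqref{forward}-\eqref{backward} on all of $[0,T]$, not merely piecewise. Continuity of $Y$ at $T_i$ holds because the solution on $[T_{i-1},T_i]$ satisfies $Y_{T_i}=u(T_i,X_{\cdot\wedge T_i})$ by the decoupling property, and this is precisely the terminal value imposed at the left endpoint of $[T_i,T_{i+1}]$; the forward process $X$ is continuous by construction since we hand over the whole stopped path. That the glued $(X,Y,Z)$ solves the dynamics globally is then immediate from the local dynamics on each piece, using that stochastic integrals and Lebesgue integrals add up across a partition. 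Moreover, on each piece $Y_t=u_t(X)$, so the identity $Y_t=u_t(X)$ holds on all of $[0,T]$; in particular $Y_0=u_0(x)$ is determined, which is what pins down the solution. Here we implicitly use Assumption \ref{A1}: the Lipschitz and integrability conditions guarantee that each local problem, with $\mathbb{H}^2$ initial path and the given terminal field, indeed falls within the scope for which the decoupling field was assumed to apply, and that the resulting processes lie in the right spaces.

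Second, \textbf{global uniqueness}. Suppose $(X,Y,Z)$ and $(X',Y',Z')$ are two solutions of \eqref{forward}-\eqref{backward} on $[0,T]$. Restrict attention to the last subinterval $[T_{N-1},T_N]$: both solutions there have terminal value $g(X_T)$, resp.\ $g(X'_T)$, but more to the point, the defining property of the decoupling field on $[T_{N-1},T_N]$ — applied with initial path $X_{\cdot\wedge T_{N-1}}$, resp.\ $X'_{\cdot\wedge T_{N-1}}$, and terminal field $u(T_N,\cdot)$ — forces $Y_t=u_t(X)$ and $Y'_t=u_t(X')$ on that interval, since $Y_{T_N}=g(X)=u(T_N,X)$. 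One then argues \emph{backward} across the partition: knowing $Y_{T_{N-1}}=u_{T_{N-1}}(X)$ and $Y'_{T_{N-1}}=u_{T_{N-1}}(X')$, the pair $(X,Y,Z)$ on $[T_{N-2},T_{N-1}]$ is a solution of the FBSDE on that subinterval with terminal condition $u(T_{N-1},\cdot)$ and initial path $X_{\cdot\wedge T_{N-2}}$; uniqueness in the definition of the decoupling field then gives $Y_t=u_t(X)$ there too, and inductively on every subinterval. Thus both solutions satisfy $Y_t=u_t(X)$, $Y'_t=u_t(X')$ on $[0,T]$; plugging this into the forward equation \eqref{forward} turns it into a \emph{decoupled} SDE for $X$ alone with coefficients $b_t(X,u_t(X),Z_t)$, $\sigma_t(X,u_t(X),Z_t)$ — and here one must control $Z$ as well. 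The cleanest route is to invoke uniqueness on each small subinterval directly: on $[T_{N-1},T_N]$ the local problem with fixed initial path has a unique solution, so the two solutions agree there provided their initial paths at $T_{N-1}$ agree; and their initial paths at $T_{N-1}$ agree because, working from $T_0$ forward, at $T_0$ both have initial condition $x$, hence by local uniqueness they coincide on $[T_0,T_1]$, hence $X_{\cdot\wedge T_1}=X'_{\cdot\wedge T_1}$, and so on inductively. This forward induction using local uniqueness is in fact the conceptually simplest way to get both existence (run it constructively) and uniqueness.

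The \textbf{main obstacle} is largely bookkeeping rather than a deep estimate: one must verify that the intermediate objects — the stopped paths $X_{\cdot\wedge T_i}$ handed from one subinterval to the next, and the terminal fields $u(T_i,\cdot)$ — meet the regularity and measurability requirements built into the definition of the decoupling field (in particular that $X_{\cdot\wedge T_i}\in\mathbb{H}^2_{T_i}(\mathbb{F},\mathbb{R}^d)$, which follows from standard a priori $\mathbb{H}^2$-estimates for the local solution under Assumption \ref{A1}), and that the length constraint $T_{i+1}-T_i\le\delta$ is used at exactly the right spot. One subtlety worth flagging: the definition grants well-posedness of the local problem with \emph{terminal data the field} $u(t_2,\cdot)$, so when checking uniqueness of a global solution one has to first establish that a global solution does satisfy $Y_{t_2}=u(t_2,X_{\cdot\wedge t_2})$ at each grid point before the local uniqueness can be brought to bear — this is the point where the continuity/consistency argument of the previous paragraph is genuinely needed, and it is the place where one should be most careful.
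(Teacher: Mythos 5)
Your proposal is correct and follows essentially the same route as the paper: existence by forward patching over a partition of mesh at most $\delta$ with terminal data $u(T_i,\cdot)$ and continuity at the junctions from the decoupling property, and uniqueness by first propagating the identity $Y_t=u_t(X)$ backward across the partition and then applying local uniqueness forward from the common initial condition $x$. The subtleties you flag (that a global solution must be shown to satisfy $Y_{T_i}=u(T_i,X_{\cdot\wedge T_i})$ before local uniqueness applies, and the $\mathbb{H}^2$ regularity of the handed-over stopped paths) are exactly the points the paper's proof relies on.
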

The theorem is a direct generalisation of \cite[Theorem 2.3]{ma2015}. For the readers' convenient, we shall detail the proof in Section \ref{S4}.

\section{Main Results} \label{S3}
\subsection{Local wellposedness of the FBSDE}
The local existence of non-Markovian FBSDE is a well-understood problem using the fixed-point approach, see for example in the book of Cvitanic and Zhang \cite{Cvitanic+Zhang}. The following Theorem generalizes the local existence result to path-dependent FBSDEs. 

\begin{thm}\label{thm_contraction}
	Under Assumption \ref{A1}, if $ K_{1}|\nabla_z\sigma|_\infty<1 $, then there exists $ \delta>0 $ such that for all $ T<\delta $, the FBSDE \eqref{forward}-\eqref{backward} has an unique solution $ (X,Y,Z) $. 
\end{thm}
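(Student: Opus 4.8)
The plan is to set up a Picard/Banach fixed-point argument on a small time interval. Fix $T<\delta$ with $\delta$ to be chosen, and work on the space $\mathcal{H}:=\mathbb{H}^2_T(\mathbb{F},\mathbb{R}^d)\times\mathbb{H}^2_T(\mathbb{F},\mathbb{R}^n)\times\mathbb{H}^2_T(\mathbb{F},\mathcal{M}_{n}(\mathbb{R}))$, or more precisely on a suitable closed ball therein equipped with a weighted norm. Given an input triple $(X,Y,Z)\in\mathcal{H}$, I would define the image $(\bar X,\bar Y,\bar Z)$ by: (i) solving the forward SDE $\mathrm{d}\bar X_t=b_t(X,Y_t,Z_t)\mathrm{d}t+\sigma_t(X,Y_t,Z_t)\mathrm{d}W_t$, $\bar X_0=x$, which is a plain SDE with random but non-anticipating coefficients (no fixed point needed here since $b,\sigma$ are frozen in their $\Theta$-arguments), using standard Lipschitz SDE theory and Assumption \ref{A1}(iii) for integrability; and (ii) solving the BSDE $\mathrm{d}\bar Y_t=-f_t(X,\bar Y_t,\bar Z_t)\mathrm{d}t+\bar Z_t\mathrm{d}W_t$, $\bar Y_T=g(X)$. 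Note I feed the newly computed $\bar X$ into nothing — the path-dependent arguments of all coefficients use the \emph{input} $X$, so the BSDE in step (ii) is a standard Lipschitz BSDE in $(\bar Y,\bar Z)$ and is well-posed by Pardoux–Peng. Writing $\Phi(X,Y,Z):=(\bar X,\bar Y,\bar Z)$, a fixed point of $\Phi$ is exactly a solution of \eqref{forward}-\eqref{backward}.

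The core is the contraction estimate. Taking two inputs and their images, denote the differences $\Delta X,\Delta Y,\Delta Z$ and $\Delta\bar X,\Delta\bar Y,\Delta\bar Z$. For the forward part, a standard application of Burkholder–Davis–Gundy together with Gronwall, using the Lipschitz bound \eqref{LipschitzCondition} in the $\|\cdot\|_{2,t}$-norm, yields $\mathbb{E}\big[\sup_{t\le T}|\Delta\bar X_t|^2\big]\lesssim (T+\text{const})\,\mathbb{E}\big[\int_0^T(\|\Delta X\|_{2,t}^2+|\Delta Y_t|^2+|\Delta Z_t|^2)\mathrm{d}t\big]$, where the crucial point is that integrating $\|\Delta X\|_{2,t}^2$ over $[0,T]$ and the pathwise sup-bound together with $T$ small makes the $\Delta X$ contribution a genuine contraction factor. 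For the backward part, applying Itô to $|\Delta\bar Y|^2$, using the Lipschitz property of $f$ and $g$, and handling the quadratic-in-$Z$ term carefully, I obtain a bound of the form $\|\Delta\bar Y\|^2+\|\Delta\bar Z\|^2\lesssim K_1^2\,\mathbb{E}[\|\Delta X\|_{2,T}^2]+(\text{small in }T)\cdot(\cdots)$.

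The main obstacle, and the reason the hypothesis $K_1|\nabla_z\sigma|_\infty<1$ is needed, is the interplay between the $Z$-dependence of $\sigma$ and the terminal-condition Lipschitz constant $K_1$. Because $\sigma$ depends on $Z$, the forward process $\bar X$ — hence $\|\Delta\bar X\|_{2,T}$ and in particular the terminal value $\Delta\bar X_T$ — inherits a contribution proportional to $|\nabla_z\sigma|_\infty\|\Delta Z\|$, and then $\Delta\bar Y$ picks up $K_1\|\Delta\bar X\|_{2,T}\le K_1|\nabla_z\sigma|_\infty\|\Delta Z\|+(\text{small})$. Closing the loop, the self-referential estimate for $\|\Delta\bar Z\|$ acquires a factor $K_1|\nabla_z\sigma|_\infty$ that does \emph{not} vanish as $T\to0$; only the strict inequality $K_1|\nabla_z\sigma|_\infty<1$ lets one absorb it, after which every remaining coefficient carries a positive power of $T$ (or of quantities tending to $0$ with $T$) and can be made $\le\tfrac12$ by choosing $\delta$ small. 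I would therefore organize the estimates so that all terms are grouped as (contractive constant bounded by $K_1|\nabla_z\sigma|_\infty$) plus ($o(1)$ as $T\to0$), conclude $\Phi$ is a contraction on the chosen ball for $T<\delta$, and invoke the Banach fixed-point theorem; uniqueness on $[0,T]$ follows from the same contraction estimate applied to any two solutions.
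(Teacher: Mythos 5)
Your plan is essentially the paper's own argument: a Picard/Banach fixed-point scheme on a small horizon, with the contraction constant split into a non-vanishing part controlled by $K_1|\nabla_z\sigma|_\infty<1$ plus terms that are $o(1)$ as $T\to0$; the only cosmetic difference is that you iterate on the full triple $(X,Y,Z)$ with an implicit BSDE step, whereas the paper iterates only on $(y,z)$, freezing them in the driver and recovering $X$ as an intermediate quantity. One caution: to obtain the sharp constant $|\nabla_z\sigma|_\infty^2$ in the forward estimate (rather than a Doob/Burkholder--Davis--Gundy constant times it, which would effectively require $2K_1|\nabla_z\sigma|_\infty<1$), you must bound $\mathbb{E}\big[|\Delta\bar X_t|^2\big]$ pointwise in $t$ via It\^o's formula and the isometry, as the paper does, rather than $\mathbb{E}\big[\sup_{t\le T}|\Delta\bar X_t|^2\big]$ as written in your sketch.
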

The proof of Theorem \ref{thm_contraction} is reported in Section \ref{S4}. For completeness, we recall the following well-known example which shows that the condition $ K_1|\nabla_z\sigma|_\infty<1 $ is necessary. 

\begin{exmp}[Delarue \cite{Delarue}]\label{contreExemple}
Let $ k $ be a constant and consider the following FBSDE
	\begin{numcases}{}
	\mathrm{d}X_{t} = (k+Z_t)\mathrm{d}W_{t} & $ X_{0} = x $  \nonumber \\
	\mathrm{d}Y_{t} = Z_{t}\mathrm{d}W_{t} & $ Y_{T} = X_T $ . \nonumber
	\end{numcases}
Then, $Y_t = X_T - \int_{t}^{T}Z_s\mathrm{d}s = X_t + k(W_T - W_t).$ The case $ k\neq0 $ leads to the contradiction $ Y_0 - x = kW_T $, while the case $ k=0 $ leads to infinitely many solutions of the FBSDE.
\end{exmp}

\begin{rem}\label{remk1}
	\begin{enumerate}
		\item The length of the interval given by Theorem \ref{thm_contraction} depends on the parameters of the FBSDE, more specifically $ K_0 $, $ K_1 $ and the product $ K_1|\nabla_z\sigma|_\infty $. The larger the Lipschitz constants $ K_0 $ and $ K_1 $, the smaller the interval on which Theorem \ref{thm_contraction} applies.
		
		\item If the forward and backward process are one dimensional and if $ z\to\sigma_t(x,y,z) $ and $ x\to g(x) $ are both invertible, the local existence of non-Markovian FBSDE in the case $ K_1|\nabla_z\sigma|_\infty>1 $ can be proved by using a time inversion technique, see Theorem 6.2 in \cite{ma2015}.
		
		\item The local existence and uniqueness of the solution of FBSDE given in Theorem \ref{thm_contraction} provides a way to construct the decoupling field on a small time interval. More precisely, consider the FBSDE \eqref{forward}-\eqref{backward} on $ [0,T] $ satisfying the assumptions in Theorem \ref{thm_contraction}. For any $ t\in[0,T] $, for any $ \mathrm{x}\in\mathcal{C}([0,t],\mathbb{R}^{d}) $, the same FBSDE on $ [t,T] $ with initial condition $ X_{t_1}=\mathrm{x} $ and terminal condition $ Y_T=g(X) $ still has a unique solution. Let $ u_t(x) := Y_{t} $. One can check easily that $ u $ is the unique associated decoupling field.
	\end{enumerate}
\end{rem}

\subsection{Existence of Regular Decoupling Field}
We next follow the same line of argument as in \cite{ma2015} in order to extend the local existence result to larger time interval. The two important ingredients for local existence in Theorem \ref{thm_contraction} are the Lipschitz constant of the terminal condition smaller than $ |\nabla_z\sigma|_\infty^{-1}$, and the time interval shorther than the constant $ \delta_0 $ determined by the coefficients of the FBSDE.

The strategy of constructing a decoupling field on a larger time interval is the following: first we construct a decoupling field $ u $ on $ [T-\delta_0, T] $ by solving locally the FBSDE and we estimate the upper bound of the gradient of the decoupling field $ u $ with respect to the forward process, which is denoted $ K_{T-\delta_0} $ and will be used as the Lipschitz constant of the terminal condition when we then try to construct a decoupling field on $ [T-\delta_0-\delta_1, T-\delta_0] $ for some $ \delta_1 $. If the condition $ K_{T-\delta_0}|\nabla_z\sigma|_\infty<1 $ is still satisfied at $ T-\delta_0 $, we can proceed the same way and extend the local existence and uniqueness step by step until the whole interval is covered.

Notice that obtaining a bound on gradient of the decoupling field $ u $ with respect to the forward process is equivalent to find an upper bound of the corresponding variational FBSDE, which will be defined in the following Section \ref{case-1}. The technique consists in using the comparison principle of quadratic BSDE given by Kobylanski in \cite{Kobylanski} to find a dominating ODE, the solution of which, if exists on $ [0,T] $, dominates the variational FBSDE. This method is used in \cite{ma2015} in the context of one-dimensional non-Markovian FBSDE then generalized in \cite{Jianfeng02} to the case where the backward process is multidimensional. In this subsection, we generalize the existing results to different types of path-dependent FBSDEs. We shall begin by showing the wellposedness of decoupled FBSDE in Section \ref{case0} as a direct consequence of the existence of dominating ODE. In Section \ref{case1}, we shall study the case where $ b_t(X,Y_t,Z_t) = b_t(X,Y_t) $ and $ \sigma_t(X,Y_t,Z_t) = \sigma_t(X) $. These are the only cases where global existence can be guaranteed for arbitrary duration under some conditions. The case where $ \nabla_z\sigma=0 $ is discussed in Section \ref{case3} and the general case at Section \ref{case4}. In both cases, the corresponding dominating ODE is a Riccati equation and we introduce the notion of maximal interval as in \cite{Fromm2013}.

\subsubsection{Variational FBSDE, characteristic BSDE and dominating ODE} \label{case-1}
\begin{note}
	\begin{enumerate}[label=(\roman*)]
		\item For $ i\in \llbracket 1,d \rrbracket $, denote $ X_{i} $ the i-th component of $ X $. The corresponding 1-dimensional forward equation is
		\begin{equation*}
		\mathrm{d}X_{i,t} = b_{i,t}(X, Y_{t}, Z_{t})\mathrm{d}t + \sigma^\intercal_{i,t}(X, Y_{t}, Z_{t})\mathrm{d}W_{t},
		\end{equation*}
		where $ b_{i,t} $ and $ \sigma_{i,t} $ is the $ i $-th component of the vector $ b_t $ and the $ i $-th row of the matrix $ \sigma_t $, respectively.
		
		\item For $ i\in \llbracket 1,n \rrbracket $, denote $ Y_{i} $ (respectively $ Z_{i} $) the i-th component of $ Y $ (respectively the $ i $-th row of $ Z $). The corresponding component-wise backward equation is
		\begin{equation*}
			\mathrm{d}Y_{i,t} = -f_{i,t}(X, Y_{t}, Z_{t})\mathrm{d}t + Z^\intercal_{i,t}\mathrm{d}W_{t},
		\end{equation*}
		where $ f_{i,t} $ is the $ i $-th component of the vector $ f_t $.
		
		\item For $ \theta,\theta'\in\Theta $, denote $ \Delta\theta = (\Delta \mathrm{x},\Delta y,\Delta z) := \theta - \theta' $. For $ \xi=b,\sigma,f $, denote
		\begin{equation}\label{grandient}
			\xi_{x,t}(\theta,\theta') := \frac{\xi(\mathrm{x},y,z) - \xi(\mathrm{x}',y',z')}{\|\Delta \mathrm{x}\|_{2,t}}; 
		\end{equation}
		\begin{equation*}
			\xi_{y,t}(\theta,\theta') := \Bigg(\frac{\xi(\mathrm{x}',y_{1}',\cdots,y_{k-1}',y_{k},\cdots,z) - \xi(\mathrm{x}',y_{1}',\cdots,y_{k}',y_{k+1},\cdots,z)}{y_{k}-y_{k}'}\Bigg)_{k \in \llbracket 1,n \rrbracket};
		\end{equation*}
		\begin{equation*}
			\xi_{z,t}(\theta_{1},\theta_{2}) := \Bigg(\frac{\xi(\mathrm{x}',y',z_{1,1}',\cdots z_{k,l},\cdots,z_{n,n}) - \xi(\mathrm{x}',y',z_{1,1}'\cdots,z_{k,l}',\cdots,z_{1,n,n})}{z_{k,l}-z_{k,l}'}\Bigg)_{k,l \in \llbracket 1,n \rrbracket};
		\end{equation*}
		where $ y_{k} $ is the k-th component of the vector $ y $ and $ z_{k,l} $ is the value of the component at the position $ (k,l) $ of the matrix $ z $. Here and in the sequel, for any Lipschitz continuous function $ \xi(x) $, when $ x = x' $, we will always take the convention that $\frac{\xi(x)-\xi(x)}{x-x}:=\inflimit_{\tilde{x}\to x}\frac{\xi(\tilde{x})-\xi(x)}{\tilde{x}-x}$.
		
		Under the above notation, for $ \xi=b,\sigma,f $, we have 
		\begin{equation*}
			\xi_t(\theta) - \xi_t(\theta') = \xi_{x,t}\|\Delta\mathrm{x}\|_{2,t} + \xi_{y,t}\Delta y_t + \Tr[\xi_{z,t}\Delta z_t].
		\end{equation*}
		
		For notation simplicity, we shall omit the index $ t $ in the following.
	\end{enumerate}	
\end{note}

Let $ X_{\cdot\wedge t_{0}},\tilde{X}_{\cdot\wedge t_{0}} $ two processes on $ \mathbb{H}^2_{t_0}(\mathbb{F},\mathbb{R}^d) $. By Theorem \ref{thm_contraction}, the FBSDE \eqref{forward}-\eqref{backward} has an unique solution $ (X,Y,Z)_{[t_{0}, T]} $ \big(respectively $ (\tilde{X},\tilde{Y}, \tilde{Z})_{[t_{0}, T]} $\big) on $ [t_{0}, T] $ given the initial condition $ X_{\cdot\wedge t_{0}} $ \big(respectively $ \tilde{X}_{\cdot\wedge t_{0}} $\big) if $ T-t_{0} $ is small enough. Denote $ X $ \big(respectively $ \tilde{X} $\big) the concatenation of $ X_{\cdot\wedge t_{0}} $ and $ X_{[t_{0}, T]} $ \big(respectively $ \tilde{X}_{\cdot\wedge t_{0}} $ and $ \tilde{X}_{[t_{0}, T]} $\big).

Let $ \mathcal{X} := X - \tilde{X} $, $ \mathcal{Y}:=Y - \tilde{Y} $ and $ \mathcal{Z}:=Z - \tilde{Z} $. We can check easily that $ (\mathcal{X}, \mathcal{Y}, \mathcal{Z}) $ is a solution of the following variational FBSDE associated to the original FBSDE \eqref{forward}-\eqref{backward} on the time interval $ [t_{0}, T] $:
\begin{numcases}{}
\mathrm{d}\mathcal{X}_{t} = (b_{x}\mathcal{D}_{t}+b_{y}\mathcal{Y}_{t}+\Tr(b_{z}\mathcal{Z}_{t}))\mathrm{d}t + (\sigma_{x}\mathcal{D}_{t}+\sigma_{y}\mathcal{Y}_{t}+\Tr(\sigma_{z}\mathcal{Z}_{t}))\mathrm{d}W_{t} & $ \mathcal{X}_{t_{0}} = x_{0} $, \\
\mathrm{d}\mathcal{Y}_{t} = (f_{x}\mathcal{D}_{t}+f_{y}\mathcal{Y}_{t}+\Tr(f_{z}\mathcal{Z}_{t}))\mathrm{d}t - \mathcal{Z}_{t}\mathrm{d}W_{t} & $ \mathcal{Y}_{T} = \Delta g $,
\end{numcases}
where $ \mathcal{D}_{t}:= \|\mathcal{X}\|_{2,t} $ and $ \Delta g := g(X) - g(\tilde{X}) $. Define
\begin{equation*}
	\mathcal{H}_t := \frac{\mathcal{Y}^\intercal_{t}\mathcal{Y}_{t}}{\mathcal{D}^{2}_{t}},
	\alpha_{t} := \frac{\mathcal{Z}_{t}}{\mathcal{D}_{t}}\text{,  }\beta_{t} := \frac{\mathcal{X}_{t}}{\mathcal{D}_{t}}\text{,  }P_{t} := \frac{\mathcal{Y}_{t}}{|\mathcal{Y}_{t}|},
\end{equation*}
\begin{equation*}
	\mathrm{d}\tilde{W}_{t} := \mathrm{d}W_{t} - 2(\sigma_{x} + \sigma_{y}P_{t}\sqrt{\mathcal{H}_{t}} + \Tr(\sigma_{z}\alpha_{t}))^\intercal\beta_{t}\mathrm{d}t.
\end{equation*}

Then it follows from Itô's formula that
\begin{equation}\label{Hequation}
	\mathrm{d}\mathcal{H}_t = -\mathcal{F}_t(\mathcal{H}_t)\mathrm{d}t + \mathcal{N}_t\mathrm{d}\tilde{W}_t\text{ with }\mathcal{F}_t(h) = A_th^2 + B_th^{3/2} + C_th + D_th^{1/2} + F_t,
\end{equation}
\begin{equation*}
	A_t = \Tr(\sigma_yP_tP_t^\intercal\sigma_y^\intercal) - 8\beta^\intercal_tP_t^\intercal\sigma_y^\intercal\sigma_yP_t\beta_t,
\end{equation*}
\begin{equation*}
	B_t = 2\beta^\intercal b_yP_t + 2\Tr\big(\sigma_yP_t\big(\sigma_x + \Tr(\sigma_z\alpha_t)\big)^\intercal\big) - 16
	\beta^\intercal P_t^\intercal\sigma_y^\intercal\big(\sigma_x + \Tr(\sigma_z\alpha_t)\big)\beta_t,
\end{equation*}
\begin{equation*}
	C_t = 2P_t^\intercal f_yP_t + |\beta_t|^2 +2\beta_t^\intercal\big(b_x + \Tr(b_z\alpha_t)\big) + \Tr(\sigma_x\sigma^\intercal_x) - 8\beta_t^\intercal\sigma_x^\intercal\sigma_x\beta_t,
\end{equation*}
\begin{equation*}
	D_t = 2P_tf_x + 2P^\intercal_t\Tr(f_z\alpha_t)\text{, }F_t = -\Tr(\alpha_t\alpha^\intercal_t),
\end{equation*}
\begin{equation*}
	\mathcal{N}_t = 2\mathcal{H}^{1/2}P^\intercal_t\alpha_t - 2\mathcal{H}_{t}\beta^\intercal_{t}(\sigma_{x} + \sigma_{y}P_{t}\mathcal{H}^{1/2}_{t} + \Tr(\sigma_{z}\alpha_{t})).
\end{equation*}
We call the equation \eqref{Hequation} the characteristic BSDE of the FBSDE.

\begin{defn}
	Let $ G:[0,T]\times\mathbb{R}\to\mathbb{R} $ be a continuously differentiable function. The ODE
	\begin{equation*}
		\dot{y}_t = -G(t, y_t)
	\end{equation*}
	is called a dominating ODE of the FBSDE \eqref{forward}-\eqref{backward} if $ G $ satisfies the following conditions:
	\begin{enumerate}[label=(\roman*)]
		\item for all $ t\in[0, T] $, we have $\mathcal{F}_t(\cdot)\leq G(t,\cdot) $, $ \mathbb{P} $-almost surely;
		\item for all $ M>0 $ there exists $ l, \hat{l} \in L^{1}([0,T],\mathbb{R}) $ such that 
		\begin{equation*}
			|G(t,h)| \leq l(t)\text{  and  }\left|\frac{\partial G}{\partial h}(t,h)\right| \leq \hat{l}(t),
			~\mbox{for all}~t\in[0,T], h\in[-M,M].
		\end{equation*}
	\end{enumerate}
\end{defn}

The following proposition is a direct adaptation from \cite[Theorem 2.3]{ma2015}.
\begin{prop} \label{prop_ODE}
	Assume that Assumption \ref{A1} holds true and that there exists a continuously differentiable function $ G:[0,T]\times\mathbb{R}\to\mathbb{R} $ such that $ \dot{y}_t = -G(t, y_t) $ is a dominating ODE of the FBSDE \eqref{forward}-\eqref{backward}. If the ODE has a bounded solution on $ [0,T] $, then the FBSDE has a unique regular decoupling field on $ [0,T] $ and therefore, it is well-posed.
\end{prop}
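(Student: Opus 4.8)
The plan is to bootstrap from the local wellposedness result (Theorem~\ref{thm_contraction}) by patching together solutions on small subintervals, exactly as in \cite{ma2015}, with the characteristic BSDE~\eqref{Hequation} providing the crucial a~priori gradient control needed to keep the patching going. The key quantitative object is the gradient bound of the decoupling field: if on a subinterval $[t_1,t_2]$ we have a regular decoupling field $u$ with Lipschitz constant $K$ with respect to $\mathrm{x}$, then on the next subinterval $[t_0,t_1]$ we re-solve the FBSDE with terminal condition $u(t_1,\cdot)$, and Theorem~\ref{thm_contraction} applies provided $K|\nabla_z\sigma|_\infty<1$ and $t_1-t_0$ is less than a length $\delta_0$ depending only on $K_0$ and $K$. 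So everything reduces to showing that the Lipschitz constant of the decoupling field — equivalently $\sup_t \mathcal{H}_t^{1/2}$, since $\mathcal{H}_t=\mathcal{Y}_t^\intercal\mathcal{Y}_t/\mathcal{D}_t^2$ encodes $|u_t(X)-u_t(\tilde X)|^2/\|X-\tilde X\|_{2,t}^2$ in the limit — stays bounded, uniformly in the number of patches, by a constant determined by the bounded solution $y$ of the dominating ODE.

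First I would establish the comparison step: since $\dot y_t=-G(t,y_t)$ is a dominating ODE, condition~(i) gives $\mathcal{F}_t(h)\le G(t,h)$ pointwise a.s., and condition~(ii) gives the $L^1$-in-time bounds on $G$ and $\partial_h G$ on compacts that are exactly the hypotheses of Kobylanski's comparison theorem \cite{Kobylanski} for quadratic BSDEs. One subtlety: $\mathcal{H}$ solves a BSDE only on each subinterval on which the local solution of the FBSDE (hence the variational FBSDE) exists, and the driver $\mathcal{F}_t$ is quadratic in $h$ with coefficients $A_t,\ldots,F_t$ that depend on $\alpha_t,\beta_t,P_t$ — these are bounded in terms of $K_0$ and the current gradient bound, so on each subinterval $\mathcal{H}$ is a genuine quadratic BSDE and the comparison with $y_t$ applies, giving $\mathcal{H}_t\le y_t$ on that subinterval (after checking the terminal inequality $\mathcal{H}_{t_2}=|\Delta g|^2/\|\Delta\mathrm x\|_{2,t_2}^2\le K_1^2\le y_{t_2}$, which must be arranged by choosing the ODE's terminal value appropriately, e.g. $y_T=K_1^2$ or larger). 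Since $y$ is bounded on $[0,T]$, say by $M$, we get $\mathcal{H}_t\le M$ uniformly, hence the decoupling field constructed on each subinterval is Lipschitz with a constant $\le\sqrt M$ independent of the subinterval. Then I would verify that $\sqrt M|\nabla_z\sigma|_\infty<1$ — this must be part of the hypothesis implicitly, or follows from the construction of $G$ — so that Theorem~\ref{thm_contraction} keeps applying and the subinterval lengths $\delta_0$ can be taken bounded below by a fixed positive constant depending only on $K_0$ and $M$. Finitely many steps then cover $[0,T]$.

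Next I would argue that the patched family $\{u \text{ on } [t_i,t_{i+1}]\}$ glues into a single $\mathbb F$-progressively measurable random field $u:[0,T]\times\Omega\times\mathcal{C}([0,T],\mathbb{R}^d)\to\mathbb{R}^n$ satisfying the decoupling identity $Y_t=u_t(X)$ on each subinterval of length $\le\delta$, which is the definition of a decoupling field; regularity (the global Lipschitz bound $\sqrt M$) is inherited from the uniform subinterval estimates. Consistency at the gluing times $t_i$ follows from uniqueness in Theorem~\ref{thm_contraction}: the value $Y_{t_i}$ produced from the right must equal $u(t_i,\cdot)$ used as terminal condition from the left. Finally, once a regular decoupling field exists, Theorem~\ref{GlobleExistence} immediately gives existence and uniqueness of the global solution $(X,Y,Z)$ with $Y_t=u_t(X)$, i.e. wellposedness. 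I would close by noting the decoupling-field construction on each small subinterval is the content of Remark~\ref{remk1}(3).

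The main obstacle I anticipate is the a~priori estimate making the comparison rigorous in the path-dependent setting: one must control the new term $\mathcal{D}_t=\|\mathcal{X}\|_{2,t}$ (the $L^2$-in-time-plus-endpoint norm of the difference of forward paths) rather than just $|\mathcal{X}_t|$, and confirm that It\^o's formula applied to $\mathcal{H}_t=\mathcal{Y}_t^\intercal\mathcal{Y}_t/\mathcal{D}_t^2$ genuinely produces~\eqref{Hequation} with a driver that is quadratic-growth in $h$ with the stated coefficients — in particular that $\mathcal{D}_t$ does not vanish (handled by the convention for difference quotients and by working with $\tilde X$ a perturbation of $X$ so that $\mathcal{D}_t>0$, then passing to the limit), and that the change of measure $\mathrm d\tilde W_t=\mathrm dW_t-2(\cdots)^\intercal\beta_t\,\mathrm dt$ is legitimate (Girsanov, using boundedness of $\beta_t,\alpha_t$ on each subinterval). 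Once these technical points are in place — and they are the genuine path-dependent additions to \cite{ma2015} — the comparison-principle argument and the patching are routine.
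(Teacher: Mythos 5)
Your proposal follows essentially the same route as the paper's proof: compare $\mathcal{H}$ with the bounded solution $y$ of the dominating ODE (terminal value $K_1^2$) via Kobylanski's comparison principle to get a uniform Lipschitz bound $K_{\max}=\sqrt{\max_{t\in[0,T]}y_t}$ on the decoupling field, deduce from Theorem \ref{thm_contraction} a uniform lower bound $\bar\epsilon$ on the admissible step length, cover $[0,T]$ in at most $T/\bar\epsilon$ iterations, and conclude with Theorem \ref{GlobleExistence}. The one point you flag as uncertain --- that $K_{\max}|\nabla_z\sigma|_\infty<1$ must persist at every iteration for the local theorem to reapply --- is indeed required and is not addressed inside the paper's proof of this proposition either; it is only resolved downstream in the case-by-case analysis of Sections \ref{case1}--\ref{case4}.
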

\begin{proof}[\textbf{Proof of Proposition \ref{prop_ODE}}]
	Let $ T>0 $. Let $ y $ be the solution of the dominating ODE:
	\begin{equation}\label{eq: dominating ODE}
	\dot{y}_t = -G(t,y_t)\text{,   }y_T = K^2_1.
	\end{equation}
Denote $ K^2_{max}:=\max_{t\in[0,T]}y_t $, the upper bound of $ y $. Firstly, by the comparison principle, we have $ \mathcal{H}_t\leq y_t\leq K^2_{max} $ for $ t\in[T-\epsilon_0, T] $ where $ \epsilon_0 $ is a constant depending on $ K_0 $, $ K_1 $, $ n $ and $ d $ given by Theorem \ref{thm_contraction}. This can be reformulated as follow using the decoupling field: for all $ t\in[T-\epsilon,T] $, for all given initial condition for the forward process $ X_{T-\epsilon_0} = x\in\mathbb{H}^2_{T-\epsilon_0}(\mathbb{F},\mathbb{R}^d) $, $ \mathbb{P} $-almost surely,
	\begin{equation}
	|u(t,X) - u(t,X')|^2\leq y_t\|X-X'\|^2_{2,t} \leq K^2_{max}\|X-X'\|^2_{2,t}.
	\end{equation}	
	To finish the proof, we only need to repeat the same procedure at $ T-\epsilon_0 $ and so on. Again by Theorem \ref{thm_contraction}, we can find $ \epsilon_1 $ such that the FBSDE has a unique solution on $ [T-(\epsilon_1 + \epsilon_0), T-\epsilon_0] $ and for all $ t\in[T-(\epsilon_1 + \epsilon_0), T-\epsilon_0] $, for all given initial condition for the forward process $ X_{T-\epsilon_0-\epsilon
		_1} = x\in\mathbb{H}^2_{T-\epsilon_0-\epsilon_1}(\mathbb{F},\mathbb{R}^d) $, 
	\begin{equation}
	|u(t,X) - u(t,X')|^2\leq y_t\|X-X'\|^2_{2,t} \leq K^2_{max}\|X-X'\|^2_{2,t}.
	\end{equation}
	Notice since $ K_{max} $ dominates the Lipschitz constants of the decoupling field $ u(t,\cdot) $ for all $ t\in[0,T] $, we can choose each $ \epsilon_i\geq\bar{\epsilon} $ where $ \bar{\epsilon} $ is a constant given by Theorem \ref{thm_contraction} when applied to a FBSDE with Lipschitz constant $ K_0 $ and $ K_{max} $. Therefore, by iterating at most $ T/\bar{\epsilon} $, we construct a decoupling filed for the FBSDE \eqref{decoupledForwardeq}-\eqref{decoupledBackwardeq} on $ [0,T] $ and by Theorem \ref{GlobleExistence}, the FBSDE has an unique solution.
\end{proof}

\subsubsection{Decoupled Path-dependent FBSDE} \label{case0}
Consider the following decoupled path-dependent FBSDE:
\begin{numcases}
	\mathrm{d}X_{t} = b_{t}(X)\mathrm{d}t + \sigma_{t}(X)\mathrm{d}W_{t} & $ X_{0} = x $  \label{decoupledForwardeq}\\
	\mathrm{d}Y_{t} = -f_{t}(X, Y_{t}, Z_{t})\mathrm{d}t + Z_{t}\mathrm{d}W_{t} & $ Y_{T} = g(X) $ \label{decoupledBackwardeq}. 
\end{numcases}
The decoupled FBSDEs are always wellposed under standard Lipschitz assumptions because one can always solve independently the forward process then inject the solution into the backward equation and solve it as a standard BSDE. Another way to prove the wellposedness is to show the existence of a unique decoupling field of the FBSDE, which, in the decoupled case, is guaranteed by Proposition \ref{prop_ODE}. More precisely, in this case we have $ A_t=B_t=0 $ and the characteristic BSDE \eqref{Hequation} becomes
\begin{equation*}
	\mathrm{d}\mathcal{H}_t = -\big( C_t\mathcal{H}_t + D_t\mathcal{H}^{1/2}_t + F_t \big)\mathrm{d}t + \mathcal{N}_t\mathrm{d}\tilde{W}_t.
\end{equation*} 
One can find a linear dominating ODE with bounded solution on $ [0,T] $.

\begin{prop}\label{thm_decoupledFBSDE}
	Consider the decoupled path-dependent FBSDE  \eqref{decoupledForwardeq}-\eqref{decoupledBackwardeq}. Under Assumptions \ref{A1}, for all $ T>0 $, the equation \eqref{decoupledForwardeq}-\eqref{decoupledBackwardeq} has an unique solution on $ [0,T] $.
\end{prop}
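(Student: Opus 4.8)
The plan is to deduce the statement directly from Proposition \ref{prop_ODE} by exhibiting an explicit \emph{linear} dominating ODE whose solution stays bounded on $[0,T]$ for every finite $T$, so that no smallness restriction on the horizon is needed. First I would specialise the characteristic BSDE \eqref{Hequation} to the decoupled setting: since in \eqref{decoupledForwardeq} neither $b$ nor $\sigma$ depends on $(y,z)$, the increment quotients $b_y,b_z,\sigma_y,\sigma_z$ all vanish, hence $A_t=B_t=0$ and \eqref{Hequation} collapses to $\mathrm{d}\mathcal{H}_t=-\big(C_t\mathcal{H}_t+D_t\mathcal{H}_t^{1/2}+F_t\big)\mathrm{d}t+\mathcal{N}_t\mathrm{d}\tilde W_t$, with $C_t=2P_t^\intercal f_yP_t+|\beta_t|^2+2\beta_t^\intercal b_x+\Tr(\sigma_x\sigma_x^\intercal)-8\beta_t^\intercal\sigma_x^\intercal\sigma_x\beta_t$, $D_t=2P_tf_x+2P_t^\intercal\Tr(f_z\alpha_t)$ and $F_t=-\Tr(\alpha_t\alpha_t^\intercal)=-|\alpha_t|^2$.

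Next comes the one genuine estimate. I would observe that $|P_t|=1$ by definition and $|\beta_t|\le 1$ because $\mathcal{D}_t^2=\|\mathcal{X}\|_{2,t}^2\ge|\mathcal{X}_t|^2$; combined with the uniform Lipschitz bound $K_0$ on $b_x,\sigma_x,f_y,f_z$ from Assumption \ref{A1}, this shows that $C_t$ is bounded by a constant $c_0=c_0(K_0,n,d)$, $\mathbb{P}$-a.s. The term $D_t\mathcal{H}_t^{1/2}$ is the only place where the a priori unbounded quantity $\alpha_t$ enters; I would control it by Young's inequality, writing $2P_t^\intercal\Tr(f_z\alpha_t)h^{1/2}\le|\alpha_t|^2+c_1h$ and $2P_tf_xh^{1/2}\le K_0^2+h$ with $c_1=c_1(K_0,n)$. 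The crucial point is that the resulting $|\alpha_t|^2$ is absorbed by the good term $F_t=-|\alpha_t|^2$, leaving $\mathcal{F}_t(h)\le(c_0+c_1+1)h+K_0^2$ for every $h\ge 0$. Setting $G(t,h):=\lambda h+K_0^2$ with $\lambda:=c_0+c_1+1$ then gives a $C^1$ function that dominates $\mathcal{F}_t$ pathwise on $[0,\infty)$ and trivially satisfies the local boundedness and local Lipschitz requirements in the definition of a dominating ODE.

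Finally, the associated ODE $\dot y_t=-\lambda y_t-K_0^2$ with $y_T=K_1^2$ is linear and solved explicitly by $y_t=(K_1^2+K_0^2/\lambda)e^{\lambda(T-t)}-K_0^2/\lambda$, which is bounded on $[0,T]$ for any finite $T$ (and nonnegative, so it stays in the domain of $\mathcal{F}_t$); Proposition \ref{prop_ODE} then delivers a unique regular decoupling field on $[0,T]$, and Theorem \ref{GlobleExistence} concludes. I do not anticipate a real obstacle: the only subtlety is the bookkeeping in the previous paragraph, namely verifying that every coefficient other than $\alpha_t$ is honestly bounded (in particular $|\beta_t|\le 1$) and that the single $\alpha_t$-linear contribution is dominated by $-|\alpha_t|^2$ — this cancellation is precisely what makes the decoupled case globally well-posed with no constraint on the horizon. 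As an alternative self-contained route one can bypass the characteristic BSDE entirely: solve the path-dependent SDE \eqref{decoupledForwardeq} for $X$ by a Banach fixed point in $\mathbb{H}^2_T(\mathbb{F},\mathbb{R}^d)$ (the Lipschitz property in $\|\cdot\|_{2,t}$ yields the contraction after the usual Burkholder--Davis--Gundy and Gronwall estimates), substitute the solution into \eqref{decoupledBackwardeq}, and apply the Pardoux--Peng existence and uniqueness theorem to the resulting standard Lipschitz BSDE; uniqueness of $(X,Y,Z)$ is then immediate from uniqueness at each stage.
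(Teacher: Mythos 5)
Your proposal is correct and follows essentially the same route as the paper: the paper likewise observes that $A_t=B_t=0$ in the decoupled case and asserts that a linear dominating ODE with bounded solution exists, then invokes Proposition \ref{prop_ODE}; your Young's-inequality absorption of the $D_t\mathcal{H}^{1/2}$ term into $F_t=-|\alpha_t|^2$ supplies the detail the paper leaves implicit. The alternative route you mention (solve the forward SDE, then apply Pardoux--Peng to the resulting Lipschitz BSDE) is also the first justification the paper gives.
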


\subsubsection{The case $ b = b_t(x,y) $ and $ \sigma = \sigma_t(x) $} \label{case1}

In this case, $ A_t=0 $ and the characteristic BSDE \eqref{Hequation} becomes
\begin{equation}\label{eq: characteristic BSDE 2}
	\mathrm{d}\mathcal{H}_t = -\big( B_t\mathcal{H}^{3/2}_t + C_t\mathcal{H}_t + D_t\mathcal{H}^{1/2}_t + F_t \big)\mathrm{d}t + \mathcal{N}_t\mathrm{d}\tilde{W}_t.
\end{equation} 
\begin{thm}\label{thm: 3.5}
	Let $ T>0 $, $ b = b_t(x,y) $, $ \sigma = \sigma_t(x) $. Let Assumption \ref{A1} hold true, and
	\begin{align}
	&(b_t(\theta) - b_t(\theta'))^\intercal b_y\Delta y 
	   -(f_t(\theta) - f_t(\theta'))^\intercal b_y^\intercal\Delta\mathrm{x}_t  \nonumber \\
	&\qquad\qquad\qquad\qquad + \Tr((\sigma_t(\theta) - \sigma_t(\theta'))^\intercal b_y\Delta z) \geq (g(\mathrm{x}_1) - g(\mathrm{x}_2))^\intercal b_y^\intercal\mathrm{x}_T, \label{conditionMonotonicity} 
	\end{align}
	for all $t\in[0, T]$, $ \theta=(\mathrm{x},y,z),\theta'=(\mathrm{x},y,z)\in\mathcal{C}([0,T],\mathbb{R}^d)\times\mathbb{R}^n\times\mathcal{M}_{n}(\mathbb{R}) $, and all
	\begin{equation*}
	b_y\in
		\mathcal{B}_y := \left\{ b_y(t,\mathrm{x},y_1,\mathrm{x},y_2)\in\mathcal{M}_{d,n}(\mathbb{R})\text{  for  }(t,\mathrm{x},y_1,y_2)\in[0,T]\times\mathcal{C}([0,T],\mathbb{R}^{d})\times(\mathbb{R}^n)^2, y_1\neq y_2 \right\}.
	\end{equation*}
	Then the FBSDE has a unique solution on $ [0, T] $.
\end{thm}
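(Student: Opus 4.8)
The plan is to obtain the result from Proposition \ref{prop_ODE}: it suffices to exhibit a dominating ODE for the FBSDE \eqref{forward}-\eqref{backward} admitting a bounded solution on all of $[0,T]$, and the whole difficulty is to tame the superlinear term of the characteristic BSDE by means of the monotonicity assumption \eqref{conditionMonotonicity}. First I would record that, since $\sigma=\sigma_t(\mathrm{x})$ is independent of $(y,z)$, one has $\sigma_y=0$ and $\sigma_z=0$, hence $A_t=0$ and $B_t=2\beta_t^\intercal b_y P_t$, so the characteristic BSDE is \eqref{eq: characteristic BSDE 2}. As in \cite{ma2015,Jianfeng02}, $C_t$, $D_t$, $F_t$ are controlled — uniformly in the unknown local solution — by $K_0$, $|\nabla_z\sigma|_\infty$ and \eqref{integrability}, so that after a Young inequality on the $h^{1/2}$–term, $C_th+D_th^{1/2}+F_t\le l_1(t)h+l_0(t)$ with $l_0,l_1\in L^1([0,T])$ deterministic. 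The sole obstruction to choosing the associated \emph{linear} ODE as a dominating ODE is the term $B_t\mathcal{H}_t^{3/2}$, for a priori only $|B_t|\le 2K_0$ is available and the Riccati-type ODE $\dot y_t=-(2K_0y_t^{3/2}+l_1(t)y_t+l_0(t))$ may explode backward in finite time.

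Condition \eqref{conditionMonotonicity} is exactly what kills that term. Writing $\mathcal{K}_t:=\mathcal{X}_t^\intercal b_y\mathcal{Y}_t/\mathcal{D}_t^2$, one has the algebraic identity $B_t\mathcal{H}_t^{3/2}=2\mathcal{K}_t\mathcal{H}_t$, so it is enough to sign the scalar process $\mathcal{X}_t^\intercal b_y\mathcal{Y}_t$ along the variational FBSDE between two solutions $(X,Y,Z)$, $(\tilde X,\tilde Y,\tilde Z)$ of \eqref{forward}-\eqref{backward}. Applying Itô's formula to $\mathcal{X}_t^\intercal b_y\mathcal{Y}_t$ and using $b_z=\sigma_y=\sigma_z=0$ together with the increment decomposition $\xi_t(\theta)-\xi_t(\theta')=\xi_{x,t}\|\Delta\mathrm{x}\|_{2,t}+\xi_{y,t}\Delta y+\Tr(\xi_{z,t}\Delta z)$, its drift turns out to be \emph{precisely} the left-hand side of \eqref{conditionMonotonicity} at $\theta=(X,Y_t,Z_t)$, $\theta'=(\tilde X,\tilde Y_t,\tilde Z_t)$, while its terminal value at $T$ is the right-hand side of \eqref{conditionMonotonicity} with $(\mathrm{x}_1,\mathrm{x}_2)=(X,\tilde X)$, $\mathrm{x}_T=\mathcal{X}_T$. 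Hence \eqref{conditionMonotonicity} forces the drift of $\mathcal{X}_t^\intercal b_y\mathcal{Y}_t$ to dominate its terminal value; combined with the terminal monotonicity of $g$ contained in \eqref{conditionMonotonicity}, this yields $\mathcal{X}_t^\intercal b_y\mathcal{Y}_t\le 0$, $\mathbb{P}$–a.s., for all $t$, so $\mathcal{K}_t\le 0$ and $B_t\le 0$. Then $\mathcal{F}_t(h)=B_th^{3/2}+C_th+D_th^{1/2}+F_t\le l_1(t)h+l_0(t)=:G(t,h)$ for every $h\ge 0$, $G$ meets the requirements of a dominating function, and the backward linear ODE $\dot y_t=-(l_1(t)y_t+l_0(t))$, $y_T=K_1^2$, has an explicit bounded solution on $[0,T]$. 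Proposition \ref{prop_ODE} then gives a regular decoupling field on $[0,T]$ and, by Theorem \ref{GlobleExistence}, the unique solvability of \eqref{forward}-\eqref{backward} on $[0,T]$ with $Y_t=u_t(X)$.

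The step I expect to be the main obstacle is the signing of $\mathcal{X}_t^\intercal b_y\mathcal{Y}_t$. Three points require care. First, $b_y$ is not a genuine gradient but a matrix of difference quotients depending on both solutions, so the Itô computation must be carried out directly in terms of the increments $b_t(\theta)-b_t(\theta')$, $f_t(\theta)-f_t(\theta')$, $\sigma_t(\theta)-\sigma_t(\theta')$ and $g(\mathrm{x})-g(\mathrm{x}')$ — this is why \eqref{conditionMonotonicity} is phrased through increments and must hold for every $b_y\in\mathcal{B}_y$ — and one must verify that the particular difference-quotient matrix arising in the variational FBSDE belongs to $\mathcal{B}_y$. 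Second, one needs the \emph{pointwise} (not merely expected) sign, since Proposition \ref{prop_ODE} requires $\mathcal{F}_t(\cdot)\le G(t,\cdot)$ $\mathbb{P}$–a.s.; this forces one to start from the terminal relation $\mathcal{X}_T^\intercal b_y\mathcal{Y}_T=\mathcal{X}_T^\intercal b_y\,\Delta g$, whose sign is controlled by \eqref{conditionMonotonicity}, and propagate it backward along the patching of the local decoupling fields, checking that the field produced at each patching time still satisfies the relevant terminal monotonicity so the argument iterates. Third, as in \cite{ma2015} one must ensure the Lipschitz bound obtained at each step is uniform — governed by $K_0$ and $K_{max}$ with $K^2_{max}=\max_{[0,T]}y_t$ — so that each local step advances the horizon by at least the fixed length furnished by Theorem \ref{thm_contraction}; it is here that the path-space norm $\|\cdot\|_{2,t}$ of Assumption \ref{A1}, through $\mathcal{D}_t=\|\mathcal{X}\|_{2,t}$, makes the a priori estimates on $(\mathcal{X},\mathcal{Y},\mathcal{Z})$ close up.
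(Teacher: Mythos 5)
Your proposal follows the paper's proof essentially verbatim: you reduce to the characteristic BSDE \eqref{eq: characteristic BSDE 2} with $A_t=0$, apply It\^o's formula to $\mathcal{X}_t^\intercal b_y\mathcal{Y}_t$ to represent $\beta_t^\intercal b_yP_t\sqrt{\mathcal{H}_t}$ as a conditional expectation whose sign is controlled by \eqref{conditionMonotonicity}, conclude $B_t\le 0$ so the superlinear term can be discarded, and invoke Proposition \ref{prop_ODE} with a linear dominating ODE. The only cosmetic difference is that the paper obtains the pointwise nonpositivity directly from the conditional-expectation representation rather than by propagating the sign through the patching of local decoupling fields, which makes your second ``point requiring care'' unnecessary.
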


\begin{proof}[\textbf{Proof of Theorem \ref{thm: 3.5}}]
	Using the definition of $ \beta $ and $ P $, by Itô's formula, we have
	$$
	\mathcal{H}_t(\beta^\intercal_tb_yP_t\sqrt{\mathcal{H}_t}) 
	= \mathcal{H}_t\frac{\mathcal{X}^\intercal_tb_y\mathcal{Y}_t}{\mathcal{D}^2_t} 
	= \frac{\mathcal{H}_t}{\mathcal{D}_t^2}\mathbb{E}_t\left[\Delta X^\intercal_Tb_y\Delta g - \int_{t}^{T}(\Delta b^\intercal_sb_y\mathcal{Y}_s - \mathcal{X}^\intercal_sb_y\Delta f_s + \Tr(\Delta\sigma^\intercal_sb_y\mathcal{Z}))\mathrm{d}s\right],
	$$
	which is nonpositive by \eqref{conditionMonotonicity}. Since $ B_t $ is non-positive, we may find two constants $ c $ and $ d $ such that $\dot{y}= -cy-d$
	is a dominating ODE for the BSDE \eqref{eq: characteristic BSDE 2}. Since the above ODE has a bounded solution on $ [0,T] $ for all $ T>0 $, the FBSDE has a unique solution by Proposition \ref{prop_ODE}.
\end{proof}

\begin{rem}
	In the one-dimensional case, if $ b $ is increasing in $ y $, we can have the following sufficient condition for \eqref{conditionMonotonicity}, which is easier to verify.
	\begin{numcases}{}
		(b_t(\theta_{1}) - b_t(\theta_{2}))\Delta y - (f_t(\theta_{1}) - f_t(\theta_{2}))\Delta \mathrm{x}_t + (\sigma_t(\theta_{1}) - \sigma_t(\theta_2))\Delta z \geq 0; \nonumber \\
		(g(\mathrm{x}_1) - g(\mathrm{x}_2))\Delta \mathrm{x}_T \leq 0. \nonumber
	\end{numcases}
	Similarly, if $ b $ is decreasing in $ y $, we have
	\begin{numcases}{}
		(b_t(\theta_{1}) - b_t(\theta_{2}))\Delta y - (f_t(\theta_{1}) - f_t(\theta_{2}))\Delta \mathrm{x}_t + (\sigma_t(\theta_{1}) - \sigma_t(\theta_2))\Delta z \leq 0; \nonumber \\
		(g(\mathrm{x}_1) - g(\mathrm{x}_2))\Delta \mathrm{x}_T \geq 0. \nonumber
	\end{numcases}
	This condition shares the same spirit as the monotonicity condition in the continuation method for solving one-dimensional Markovian framework FBSDE introduce by Hu and Peng in \cite{Hu1995}. More details on the continuation methods for solving Markovian FBSDE can be found for example in \cite[Section 11.4]{Cvitanic+Zhang}.
\end{rem}

\subsubsection{The case $ \sigma = \sigma_t(x,y) $} \label{case3}
We recall the following example which shows that Assumption \ref{A1} is not enough in this case for global wellposedness.

\begin{exmp}[Fromm \& Imkeller \cite{Fromm2013}]
	Consider the following fully coupled FBSDE:
	$$
	\mathrm{d}X_t = Y_t\mathrm{d}t ,~X_0 = x,
	~\mbox{and}~
	\mathrm{d}Y_t = Z_t\mathrm{d}W_t,~Y_T = X_T .
	$$
We notice that the condition in Theorem \ref{thm_contraction} is satisfied in this case. Clearly for $ T<1 $ the problem has a unique decoupling field $
		u(t,x) = \frac{x}{1 - (T-t)}$, and we have $X_t = x\frac{1-(T-t)}{1-T}\text{, }Y_t = \frac{x}{1-T}\text{ and }Z_t = 0.$
	Notice that when $ x\neq0 $, $ u $ tend to infinity in the neighbourhood of $ 0 $ when $ T\to1 $, thus there is no decoupling field on $ [0,1] $ for this FBSDE.
\end{exmp}

Now let's consider the following path-dependent FBSDE:
\begin{numcases}
\text{ }\mathrm{d}X_{t} = b_{t}(X, Y_{t}, Z_{t})\mathrm{d}t + \sigma_{t}(X, Y_{t})\mathrm{d}W_{t} & $ X_{0} = x $ \label{coupledForwardeq_2} \\
\mathrm{d}Y_{t} = -f_{t}(X, Y_{t}, Z_{t})\mathrm{d}t + Z_{t}\mathrm{d}W_{t} & $ Y_{T} = g(X) $. \label{coupledBackwardeq_2}
\end{numcases}

The condition in Theorem \ref{thm_contraction} $ K_1 |\nabla_z\sigma|_\infty<1 $ is automatically satisfied. Therefore, there exists $ \epsilon>0 $ such that the FBSDE \eqref{coupledForwardeq_2}-\eqref{coupledBackwardeq_2} has a unique regular decoupling field $ u $ on $ [T-\epsilon, T] $ with the terminal condition $ u_T(X) = g(X) $. Denote $ \mathcal{H} $ the solution of the associated characteristic BSDE \eqref{Hequation}. Note that $ \esssup\mathcal{H}_t $ is a Lipschitz constant of $ u_t $ with respect to the path space variable. Therefore, as long as the solution of the characteristic BSDE $ \mathcal{H} $ is bounded on $ [T-\epsilon,T] $, we can re-apply the local existence result at $ T-\epsilon $ with terminal condition $ Y_{T-\epsilon} = u_{T-\epsilon}(X) $ and so on. Notice that the length of the time interval $ \epsilon $ given by Theorem \ref{thm_contraction} will decrease when the Lipschitz constant of the terminal condition of the backward process increases. The Lipschitz constant that we get at time $ T-\epsilon $ is $ \esssup\mathcal{H}_{T-\epsilon} $, which is always bigger than $ K_1 $. It means that the length of step at which we iterate the procedure decreases. In order to find the maximal time interval on which we can construct a solution by the above procedure, one way is to find a dominating ODE and find the time of the explosion $ T_{max} $ of the ODE. By Proposition $ \ref{prop_ODE} $, for any $ T<T_{max} $, the FBSDE has a unique solution. One possible dominating ODE is
\begin{equation*}
	\dot{y}_t = \left(|A_t|_\infty+\frac{|B_t|_\infty}{2}\right)  y^2_t + \left(|C_t|_\infty+\frac{|B_t|_\infty}{2}+\frac{|D_t|_\infty}{2}\right) y_t + \left(|F_t|_\infty+\frac{|D_t|_\infty}{2}\right),
\end{equation*}
where $ |\cdot|_\infty $ is the essential supremum and the coefficients are given in \eqref{Hequation}. The result is summarized in the following Theorem.

\begin{thm}
	Assume that Assumption \ref{A1} holds true. Then there exists a dominating Riccati ODE with terminal condition $ y_T = K_1 $. In addition, there exists $ T_{max}>0 $ depending only on the dimension and the Lipschitz coefficients of the FBSDE such that the dominating ODE has a bounded solution on $ [0,T] $ for all $ T<T_{\max} $ and hence, the FBSDE \eqref{coupledForwardeq_2}-\eqref{coupledBackwardeq_2} has a unique solution on $ [0,T] $.
\end{thm}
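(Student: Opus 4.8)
The plan is to reduce everything to Proposition~\ref{prop_ODE}. Since in \eqref{coupledForwardeq_2}-\eqref{coupledBackwardeq_2} the diffusion $\sigma=\sigma_t(X,Y_t)$ does not depend on $z$, we have $|\nabla_z\sigma|_\infty=0$, so the local wellposedness Theorem~\ref{thm_contraction} applies regardless of the size of the terminal Lipschitz constant, and it is enough to exhibit a continuously differentiable $G:[0,T]\times\mathbb{R}\to\mathbb{R}$ which is a dominating ODE of the FBSDE and whose backward solution $\dot y_t=-G(t,y_t)$, $y_T=K_1^2$, remains bounded on $[0,T]$. Concretely I would prove that the drift $\mathcal{F}_t$ of the characteristic BSDE \eqref{Hequation} satisfies $\mathcal{F}_t(h)\le\kappa_2h^2+\kappa_1h+\kappa_0$ for all $h\ge0$, with nonnegative constants $\kappa_0,\kappa_1,\kappa_2$ depending only on $K_0$ and on the dimensions $n,d$; then $G(t,h):=\kappa_2h^2+\kappa_1h+\kappa_0$ is a constant-coefficient dominating Riccati ODE. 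Note that the ``natural candidate'' displayed just before the theorem cannot be used verbatim: because $C_t$, $D_t$, $F_t$ in \eqref{Hequation} contain the ratio $\alpha_t=\mathcal{Z}_t/\mathcal{D}_t$, which is a priori unbounded, their essential suprema $|C_t|_\infty$, $|D_t|_\infty$, $|F_t|_\infty$ are infinite and one must reorganize the $\alpha_t$-dependence first.

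For the estimate on $\mathcal{F}_t$, Assumption~\ref{A1} gives that the difference quotients $b_x,b_y,b_z,\sigma_x,\sigma_y,f_x,f_y,f_z$ have operator norm at most $K_0$, while $\sigma_z\equiv0$; moreover $|P_t|=1$ by construction and $|\beta_t|=|\mathcal{X}_t|/\mathcal{D}_t\le1$ since $\mathcal{D}_t^2=\int_0^t|\mathcal{X}_s|^2\mathrm{d}s+|\mathcal{X}_t|^2\ge|\mathcal{X}_t|^2$. Hence $A_t$ and $B_t$ — the latter no longer involving $\alpha_t$ once $\sigma_z=0$ — are bounded in absolute value by some $c(K_0,n,d)$. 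The only $\alpha_t$-dependence left in $\mathcal{F}_t(h)=A_th^2+B_th^{3/2}+C_th+D_th^{1/2}+F_t$ is $2\beta_t^\intercal\Tr(b_z\alpha_t)\,h+2P_t^\intercal\Tr(f_z\alpha_t)\,h^{1/2}-|\alpha_t|^2$, and by Young's inequality the first two terms are at most $\tfrac12|\alpha_t|^2+c(K_0,n,d)(h^2+h)$, so after cancellation with $F_t=-|\alpha_t|^2$ only $c(K_0,n,d)(h^2+h)$ survives. The $\alpha_t$-free parts of $C_t,D_t$ are bounded by $c(K_0,n,d)$, and the elementary inequalities $h^{3/2}\le\tfrac12(h^2+h)$ and $h^{1/2}\le\tfrac12(1+h)$ valid for $h\ge0$ let one absorb the half-integer powers into $h^2$, $h$ and $1$. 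Collecting everything yields $\mathcal{F}_t(h)\le\kappa_2h^2+\kappa_1h+\kappa_0$ with $\kappa_0,\kappa_1,\kappa_2\ge0$ depending only on $K_0,n,d$, so that $G(t,h):=\kappa_2h^2+\kappa_1h+\kappa_0$ is $C^1$, satisfies the local $L^1$ bounds in the definition of a dominating ODE, and dominates $\mathcal{F}_t$ on the range $[0,\infty)$ of $\mathcal{H}_t$; this is the announced dominating Riccati ODE with terminal value $K_1^2$.

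It then remains to read off the explosion time. Solving $\dot y_t=-(\kappa_2y_t^2+\kappa_1y_t+\kappa_0)$ backward from $y_T=K_1^2\ge0$, the right-hand side is nonnegative on $[0,\infty)$, so $y$ is nonincreasing in $t$, i.e.\ nondecreasing as $t$ decreases, and by separation of variables it reaches $+\infty$ as $t\downarrow T-T_{\max}$, where
\[
T_{\max}:=\int_{K_1^2}^{\infty}\frac{\mathrm{d}r}{\kappa_2r^2+\kappa_1r+\kappa_0}\in(0,\infty],
\]
the integral being finite whenever $\kappa_2>0$ and $T_{\max}=\infty$ in the degenerate case $\kappa_2=0$ (which only strengthens the conclusion). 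Since $\kappa_0,\kappa_1,\kappa_2$ depend only on $K_0,n,d$, the number $T_{\max}$ depends only on $K_1$, $K_0$ and $n,d$. For any $T<T_{\max}$ one has $T-T_{\max}<0$, hence $y$ exists and is bounded on $[0,T]$, and Proposition~\ref{prop_ODE} produces a unique regular decoupling field for \eqref{coupledForwardeq_2}-\eqref{coupledBackwardeq_2} on $[0,T]$; by Theorem~\ref{GlobleExistence} the FBSDE is then well posed on $[0,T]$.

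The main obstacle is the structural point in the second paragraph: none of $C_t,D_t,F_t$ is bounded on its own, because $\alpha_t=\mathcal{Z}_t/\mathcal{D}_t$ is controlled only by the (still unknown) Lipschitz constant of the decoupling field, so one cannot extract a Riccati bound coefficient by coefficient. What saves the argument is the negative sign of $F_t=-|\alpha_t|^2$, which via Young's inequality absorbs the linear-in-$\alpha_t$ cross terms appearing in $C_th$ and $D_th^{1/2}$; and this is exactly where the hypothesis $\nabla_z\sigma=0$ enters, since it kills the $\Tr(\sigma_z\alpha_t)$ contributions to $B_t$ that would otherwise generate an $|\alpha_t|^2h$ term of uncontrollable sign. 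Everything else — tracking the dimensional constants and the power-reduction inequalities — is routine bookkeeping.
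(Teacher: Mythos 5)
Your proposal is correct, and it follows the same overall strategy the paper intends: bound the drift $\mathcal{F}_t$ of the characteristic BSDE \eqref{Hequation} by a deterministic quadratic in $h$, obtain a constant-coefficient dominating Riccati ODE with terminal value $K_1^2$ (the paper's ``$y_T=K_1$'' is evidently a typo for $K_1^2$, consistent with the proof of Proposition \ref{prop_ODE}), read off the blow-up time by separation of variables, and conclude via Proposition \ref{prop_ODE} and Theorem \ref{GlobleExistence}. Where you genuinely depart from the paper is at the key step of producing finite Riccati coefficients. The paper's displayed candidate $\dot{y}_t=(|A_t|_\infty+\tfrac{|B_t|_\infty}{2})y_t^2+(|C_t|_\infty+\tfrac{|B_t|_\infty}{2}+\tfrac{|D_t|_\infty}{2})y_t+(|F_t|_\infty+\tfrac{|D_t|_\infty}{2})$ takes essential suprema of the raw coefficients, and as you observe $C_t$, $D_t$, $F_t$ contain $\alpha_t=\mathcal{Z}_t/\mathcal{D}_t$, which is controlled only by the as-yet-unknown Lipschitz constant of the decoupling field; taken literally, those suprema are not finite constants depending only on $K_0,K_1,n,d$, and a $T_{\max}$ built from them would not be a priori determined by the data. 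Your repair --- absorbing the terms linear in $\alpha_t$ in $C_t h$ and $D_t h^{1/2}$ into the negative $F_t=-\Tr(\alpha_t\alpha_t^\intercal)$ via Young's inequality, and noting that $\sigma_z\equiv 0$ removes the otherwise sign-uncontrollable $\Tr(\sigma_z\alpha_t)$ contributions to $B_t$ --- is exactly the mechanism used in the one-dimensional analysis of \cite{ma2015}, and it is the right way to make the statement ``$T_{\max}$ depends only on the dimension and the Lipschitz coefficients'' literally true. The remaining ingredients (the bounds $|P_t|=1$, $|\beta_t|\leq 1$ from $\mathcal{D}_t^2\geq|\mathcal{X}_t|^2$, the power-reduction inequalities $h^{3/2}\leq\tfrac12(h^2+h)$ and $h^{1/2}\leq\tfrac12(1+h)$, and the formula $T_{\max}=\int_{K_1^2}^{\infty}(\kappa_2 r^2+\kappa_1 r+\kappa_0)^{-1}\mathrm{d}r$) are all sound. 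In short, your version proves the theorem as stated and is more careful than the paper's sketch at the one point where care is actually needed.
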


\subsubsection{General Case} \label{case4}
In the general case where $ |\nabla_z\sigma|_\infty\neq 0 $, in order to have the existence on small time interval, we need to have the condition $ K_1|\nabla_z\sigma|_\infty<1 $. To use the same technique to extend the existence result on larger interval, we need to maintain the very same condition, i.e. $ |\nabla_\mathrm{x}u(t,\cdot)|_\infty|\nabla_z\sigma|_\infty<1 $, where $ |\nabla_\mathrm{x}u(t,\cdot)|_\infty $ is the essential supremum of all the directional derivatives of the decoupling field $ u $ with respect to the path space variable $ \mathrm{x}\in\mathcal{C}([0,t],\mathbb{R}^d) $ at time $ t $ as defined in \eqref{grandient}. We introduce now the notion of maximal interval as in \cite{Fromm2013}.

\begin{defn}
	The maximal interval $ I_{\max} $ on $ [0,T] $ for the FBSDE $ (b,\sigma,f,g) $ is defined as the union of all intervals of form $ [t,T] $ on which the FBSDE $ (b,\sigma,f,g) $ has a decoupling field $ u $ such that $ |\nabla_\mathrm{x}u(s,\cdot)|_\infty|\nabla_z\sigma|_\infty<1 $ for all $ s\in[t,T] $.
\end{defn}
\begin{rem}
	Notice that the maximal interval for a FBSDE given $ T $ may very well be open to the left. In this case we say a decoupling field is regular on $ I_{\max} $ if $ u $ restricted to $ [s,T] $ is a regular decoupling field for all $ s\in I_{\max} $.
\end{rem}
\begin{prop}{\rm \cite[Theorem 2]{Fromm2013}}\label{Prop4.9}
	Under Assumption \ref{A1}, if $ K_1|\nabla_z\sigma|_\infty<1 $, let $ I_{\max} $ be the maximal interval associated to the FBSDE $ (b,\sigma,f,g) $, then there exists an unique regular decoupling field $ u $ satisfying $|\nabla_\mathrm{x}u(t,\cdot)|_\infty|\nabla_z\sigma|_\infty<1$.
\end{prop}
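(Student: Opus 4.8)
The proof is the path-dependent counterpart of \cite[Theorem 2]{Fromm2013}, resting on the local wellposedness of Theorem~\ref{thm_contraction} (applied on sub-intervals, as in Remark~\ref{remk1}(3)) and on the patching scheme already used for Proposition~\ref{prop_ODE}. The first thing I would check is that $I_{\max}\neq\emptyset$: by Theorem~\ref{thm_contraction} and Remark~\ref{remk1}(3) there is, for $\eta>0$ small, a unique regular decoupling field $u$ on $[T-\eta,T]$, and its Lipschitz constant in $\mathrm{x}$ tends to $K_1$ as $t\uparrow T$ (compare the characteristic BSDE \eqref{Hequation} with a dominating ODE started from $K_1^2$ at $T$, exactly as in the proof of Proposition~\ref{prop_ODE}, noting $\mathcal{H}_T\le K_1^2$ from \eqref{Lipschitz}). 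Since $K_1|\nabla_z\sigma|_\infty<1$ by hypothesis, shrinking $\eta$ once more gives $|\nabla_\mathrm{x}u(t,\cdot)|_\infty|\nabla_z\sigma|_\infty<1$ on all of $[T-\eta,T]$, hence $[T-\eta,T]\subseteq I_{\max}$.

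The core of the argument is a uniqueness/consistency lemma: \emph{any two regular decoupling fields $u^1,u^2$ defined on a common interval $[s,T]$ coincide on $[s,T]$.} Let $\delta^1,\delta^2>0$ be the windows from the definition, fix an arbitrary point of $[s,T]$, and choose a partition $s\le t_0<t_1<\dots<t_N=T$ through that point with mesh at most $\min(\delta^1,\delta^2)$. By downward induction on $k$ one shows $u^1(t_k,\cdot)=u^2(t_k,\cdot)$: for $k=N$ both equal $g$; assuming it at $t_k$, for any initial path $\mathrm{x}\in\mathbb{H}^2_{t_{k-1}}(\mathbb{F},\mathbb{R}^d)$ the FBSDE on $[t_{k-1},t_k]$ with terminal condition $Y_{t_k}=u^1(t_k,\cdot)=u^2(t_k,\cdot)$ is literally one and the same equation, so by the defining property of $u^1$ it has a unique solution whose value at $t_{k-1}$ is $u^1(t_{k-1},\mathrm{x})$, and by that of $u^2$ the same unique solution has value $u^2(t_{k-1},\mathrm{x})$ at $t_{k-1}$; hence $u^1(t_{k-1},\mathrm{x})=u^2(t_{k-1},\mathrm{x})$ for every $\mathrm{x}$ (first for deterministic paths, then $\mathbb{P}$-a.s.\ as fields by separability of the path space and the Lipschitz continuity in $\mathrm{x}$). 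This closes the induction and gives $u^1\equiv u^2$ on $[s,T]$; in particular the local fields furnished by Theorem~\ref{thm_contraction} on overlapping sub-intervals are mutually consistent.

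With this in hand, for each $s\in I_{\max}$ pick, by definition of $I_{\max}$, an interval $[t',T]\subseteq I_{\max}$ with $t'\le s$ carrying a decoupling field satisfying $|\nabla_\mathrm{x}u^{t'}(r,\cdot)|_\infty|\nabla_z\sigma|_\infty<1$ for all $r\in[t',T]$, and set $u(s,\cdot):=u^{t'}(s,\cdot)$; the lemma makes this independent of the choice of $[t',T]$, so $u$ is a well-defined $\mathbb{F}$-progressive field on $I_{\max}$. Its restriction to any $[s,T]\subseteq I_{\max}$ is a restriction of some $u^{t'}$, hence a regular decoupling field (same window, same Lipschitz constant), so $u$ is regular on $I_{\max}$ in the sense of the convention above; and for every $t\in I_{\max}$ the choice $[t',T]\ni t$ gives $|\nabla_\mathrm{x}u(t,\cdot)|_\infty|\nabla_z\sigma|_\infty<1$. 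Uniqueness of such a field among regular decoupling fields on $I_{\max}$ follows by applying the lemma on each $[s,T]\subseteq I_{\max}$.

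The one genuinely delicate point — and the main obstacle — is that the local-existence window in Theorem~\ref{thm_contraction} shrinks as the Lipschitz constant of the terminal data grows, and along $I_{\max}$ that constant may approach the critical value $|\nabla_z\sigma|_\infty^{-1}$ as $t$ decreases to $\inf I_{\max}$; there is therefore no single window valid on all of $I_{\max}$, and every step above must be carried out on compact sub-intervals $[s,T]\subseteq I_{\max}$ — where a strict gap below the threshold, hence a positive window, is available — and only then reassembled by taking the union, never attempting to close $I_{\max}$ at its left endpoint. This is precisely why the statement records only the pointwise strict inequality $|\nabla_\mathrm{x}u(t,\cdot)|_\infty|\nabla_z\sigma|_\infty<1$ together with the ``regular on $I_{\max}$'' convention, rather than a decoupling field on a closed interval; the remaining verifications are routine transcriptions of the classical argument of \cite{Fromm2013}.
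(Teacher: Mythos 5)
Your proof is correct and follows essentially the same route as the paper: the key step in both is the consistency of decoupling fields on overlapping intervals $[t_1,T]$ and $[t_2,T]$, established by the backward-induction/patching argument from the proof of Theorem~\ref{GlobleExistence}, after which $u(t,\cdot):=u^{t}(t,\cdot)$ is well defined on $I_{\max}$. Your additional verifications (non-emptiness of $I_{\max}$ via Theorem~\ref{thm_contraction}, and the remark that all arguments must be run on compact sub-intervals $[s,T]\subseteq I_{\max}$ because the local window may shrink near the left endpoint) are sound and merely make explicit what the paper leaves implicit.
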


\begin{proof}
For any $t\in I_{\max}$, by definition of $I_{\max}$ and Theorem \ref{GlobleExistence}, there exists a unique decoupling field $u^{t}$ on $[t,T]$. For $t_1, t_2 \in I_{\max}$, denote $u^{t_1}, u^{t_2}$ the respective decoupling field on $[t_1,T]$ and $[t_2,T]$ for FBSDE $ (b,\sigma,f,g) $. By the same arguments as the proof of Theorem \ref{GlobleExistence}, one can show that $u^{t_1}$ and $u^{t_2}$ coincides on $[\max(t_1,t_2),T]$ and therefore $u(t,\cdot) := u^{t}(t,\cdot)$ for all $t\in I_{\max}$ is a decoupling field for FBSDE $ (b,\sigma,f,g) $.
\end{proof}

\begin{prop}\label{prop: 3.8}
	Under Assumption \ref{A1} and assume that $ K_1|\nabla_z\sigma|_\infty<1 $, if the maximal interval associated to the FBSDE $ (b,\sigma,f,g) $ is open on the left, i.e. $ I_{\max} = (t_{\min},T] $, then necessarily,
	\begin{equation}\label{leftLimit}
	\lim_{t\downarrow t_{\min}}|\nabla_\mathrm{x}u(t,\cdot)|_\infty|\nabla_z\sigma|_\infty = 1.
	\end{equation}
\end{prop}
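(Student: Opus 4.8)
The plan is to argue by contradiction, exploiting the uniformity of the local step furnished by Theorem~\ref{thm_contraction}. First observe that, by Proposition~\ref{Prop4.9}, the regular decoupling field $u$ satisfies $|\nabla_\mathrm{x}u(t,\cdot)|_\infty|\nabla_z\sigma|_\infty<1$ for every $t\in I_{\max}=(t_{\min},T]$, so that $\limsup_{t\downarrow t_{\min}}|\nabla_\mathrm{x}u(t,\cdot)|_\infty|\nabla_z\sigma|_\infty\le 1$ automatically (recall that in this subsection $|\nabla_z\sigma|_\infty>0$). Hence it suffices to prove that the corresponding $\liminf$ is at least $1$. Suppose not; then there exist $\lambda\in(0,1)$ and a sequence $t_k\downarrow t_{\min}$ with $|\nabla_\mathrm{x}u(t_k,\cdot)|_\infty\le K_\lambda:=\lambda\,|\nabla_z\sigma|_\infty^{-1}$ for all $k$.

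Next I would restart the construction of the decoupling field just to the left of each $t_k$. For fixed $k$, the random field $u(t_k,\cdot)$ is $\mathcal{F}_{t_k}$-measurable, it meets the integrability requirement (since $u$ is a regular decoupling field), and it is Lipschitz in $\mathrm{x}$ with constant $\le K_\lambda$, so that $K_\lambda|\nabla_z\sigma|_\infty=\lambda<1$. Applying Theorem~\ref{thm_contraction} to the FBSDE on an interval of the form $[t_k-\delta,t_k]$ with terminal datum $Y_{t_k}=u(t_k,\cdot)$ produces a unique solution on $[t_k-\delta_\lambda,t_k]$ for some $\delta_\lambda>0$ that depends only on $K_0$, $K_\lambda$ and the product $\lambda$, and in particular \emph{not} on $k$ (Remark~\ref{remk1}(1)). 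By Remark~\ref{remk1}(3) this yields a regular decoupling field $\tilde u$ on $[t_k-\delta_\lambda,t_k]$ with $\tilde u(t_k,\cdot)=u(t_k,\cdot)$, and the patching/uniqueness argument of Theorem~\ref{GlobleExistence} (used exactly as in the proof of Proposition~\ref{Prop4.9}) shows that $\tilde u$ and $u$ concatenate into a regular decoupling field on $[t_k-\delta_\lambda,T]$.

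To conclude that this extended interval lies in $I_{\max}$, I must still check that $|\nabla_\mathrm{x}\tilde u(s,\cdot)|_\infty|\nabla_z\sigma|_\infty<1$ for $s\in[t_k-\delta_\lambda,t_k]$. Here I would invoke the characteristic BSDE \eqref{Hequation} of the extended FBSDE together with the dominating Riccati ODE of Section~\ref{case4}: by Kobylanski's comparison principle, the characteristic process satisfies $\mathcal{H}_s\le y^{(k)}_s$, where $y^{(k)}$ solves the dominating Riccati ODE with terminal value $y^{(k)}_{t_k}=K_\lambda^2=\lambda^2|\nabla_z\sigma|_\infty^{-2}<|\nabla_z\sigma|_\infty^{-2}$. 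Since the coefficients of that ODE are bounded by quantities depending only on $K_0$, the dimensions $n,d$ and the standard a priori (BMO-type) bounds on the variational FBSDE, they are uniform in $k$, so that $y^{(k)}$ stays finite and below $|\nabla_z\sigma|_\infty^{-2}$ on some interval $[t_k-\delta_\lambda',t_k]$ with $\delta_\lambda'>0$ again independent of $k$ (it equals $\lambda^2|\nabla_z\sigma|_\infty^{-2}$ at $t_k$ and is continuous). As $\esssup\mathcal{H}_s$ is a Lipschitz constant of the decoupling field at time $s$ with respect to the path variable (Section~\ref{case3}), this gives $|\nabla_\mathrm{x}\tilde u(s,\cdot)|_\infty|\nabla_z\sigma|_\infty<1$ on $[t_k-\delta^*,t_k]$ with $\delta^*:=\min(\delta_\lambda,\delta_\lambda')>0$.

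Therefore $[t_k-\delta^*,T]\subseteq I_{\max}$ for every $k$, with the \emph{same} $\delta^*>0$. Choosing $k$ large enough that $t_k<t_{\min}+\delta^*$ gives $t_k-\delta^*<t_{\min}$, contradicting $I_{\max}=(t_{\min},T]$. This yields $\liminf_{t\downarrow t_{\min}}|\nabla_\mathrm{x}u(t,\cdot)|_\infty|\nabla_z\sigma|_\infty\ge1$, and combined with the $\limsup$ inequality the limit \eqref{leftLimit} follows. The main obstacle I expect is exactly this uniformity in $k$ — of both the extension length and the Lipschitz control of the newly constructed field — one must make sure that restarting at each ``good'' time $t_k$ buys a fixed amount of new interval on which the structural constraint $|\nabla_\mathrm{x}u|_\infty|\nabla_z\sigma|_\infty<1$ is preserved, despite the possibly bad behaviour of $|\nabla_\mathrm{x}u(t,\cdot)|_\infty$ at other times near $t_{\min}$; this rests on the quantitative dependence of Theorem~\ref{thm_contraction} and of the dominating ODE solely on $(K_0,K_\lambda,\lambda)$ and the dimensions.
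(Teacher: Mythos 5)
Your proposal is correct and follows essentially the same route as the paper: assume the limit inferior is below $1$ along a sequence $t_k\downarrow t_{\min}$, use the fact that the local existence step of Theorem~\ref{thm_contraction} has a length depending only on $K_0$, the terminal Lipschitz constant and $|\nabla_z\sigma|_\infty$ (hence uniform in $k$), and extend the decoupling field past $t_{\min}$ to contradict the definition of $I_{\max}$. You are in fact somewhat more careful than the paper's own (very terse) argument, both in separating the $\limsup\le 1$ bound coming from Proposition~\ref{Prop4.9} and in checking via the characteristic BSDE that the structural constraint $|\nabla_\mathrm{x}\tilde u|_\infty|\nabla_z\sigma|_\infty<1$ persists on the extension — a point the paper leaves implicit.
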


\begin{proof}
The same argument as in the Markovian case of \cite{Fromm2013} applies here, we report it for completeness.
	Assume that there exist a sequence of $ (t_n)_{n\geq0}\downarrow t_{\min} $ such that
	\begin{equation*}
	\lim_{t_n\downarrow t_{\min}}|\nabla_\mathrm{x}u(t_n,\cdot)|_\infty|\nabla_z\sigma|_\infty < 1.
	\end{equation*}
	According to Remark \ref{remk1}, one can construct a small time interval $ \epsilon $ depending only on the Lipschitz coefficient of the FBSDE $ K_0 $, $ \limsup_{t_n\downarrow t_{\min}}|\nabla_\mathrm{x}u(t_n,\cdot)|_\infty $ and $|\nabla_z\sigma|_\infty $ such that for $ n $ large enough, we can construct a decoupling field for the FBSDE on the interval $ [t_n-\epsilon, t_n] $. Since $ \epsilon $ is independent of $ n $, one can choose a $ n $ such that $ t_n-\epsilon<t_{\min} $, contradicting the definition of maximal interval.
\end{proof}

\section{Stability of path-dependent multidimensional FBSDE}\label{S5}
Let $ \mathcal{L} $ be the set of all $ \mathbb{F} $-adapted processes $ (Y,Z) $ with
$\|(Y,Z)\|_2 := \sup_{t\in[0,T]}\big\{\mathbb{E}\big[|Y_{t}|^{2} + \int_{t}^{T}|Z_{s}|^{2}\mathrm{d}s\big]\big\}<+\infty$. Consider the path-dependent FBSDE \eqref{forward}-\eqref{backward}, and denote
\begin{equation*}
	I^2_0 := \mathbb{E}\Big[ \Big(\int_{0}^{T}\left|f^0_t\right| + \left|b^0_t\right|\mathrm{d}t\Big)^2 + \int_{0}^{T}\left|\sigma^0_t\right|^2\mathrm{d}t \Big].
\end{equation*}

The following lemma generalizes the existing result on the a priori estimate on FBSDE. The techniques are similar with an additional difficulty that the coefficients of the FBSDE can depend on the whole path of the forward process $ X $.

\begin{lem}[A Priori Estimate for FBSDE on Small Time Interval]\label{estimateFBSDE}
	Assume that all the hypotheses in the Assumption \ref{A1} are satisfied. Let $ T $ be a small time horizon on which Theorem \ref{thm_contraction} applies. If $ (X,Y,Z)\in\mathbb{H}_T^2\times\mathcal{L} $ are solution of the FBSDE \eqref{forward}-\eqref{backward} on $ [0,T] $, then there exists a constant $ C $ such that
	\begin{equation*}
	\sup_{t\in[0,T]}\Big\{\mathbb{E}\Big[\|X\|^2_{2,t} + |Y_{t}|^{2} + \int_{t}^{T}|Z_{s}|^{2}\mathrm{d}s\Big]\Big\} \leq C\big(|x|^2 + \mathbb{E}\left[ |g^0|^2 \right] + I^2_0\big).
	\end{equation*}
\end{lem}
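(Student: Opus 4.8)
The plan is to couple the standard forward and backward a priori estimates, using the smallness of $T$ to absorb the cross-terms produced by the drift coefficients, while keeping track of the \emph{sharp} Lipschitz constants $|\nabla_z\sigma|_\infty$ (for the $z$-dependence of $\sigma$) and $K_1$ (for the $\mathrm{x}$-dependence of $g$), so that the structural condition $K_1|\nabla_z\sigma|_\infty<1$ underlying Theorem \ref{thm_contraction} can be used to close the system. Throughout, write $\chi:=\sup_{t}\mathbb{E}\|X\|_{2,t}^2$ and $\zeta:=\mathbb{E}\int_0^T|Z_s|^2\mathrm{d}s$; since $\|X\|_{2,s}^2=\int_0^s|X_r|^2\mathrm{d}r+|X_s|^2$ one has $\mathbb{E}\int_0^T\|X\|_{2,s}^2\mathrm{d}s\le(1+T)\sup_t\mathbb{E}|X_t|^2$, so the path-norm is handled uniformly by Gronwall's lemma and the only genuine coupling is $X\leftrightarrow Z$.

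\emph{Backward estimate.} Applying It\^o's formula to $|Y_t|^2$ along \eqref{backward} (with the usual localization, legitimate since $Y$ is continuous and $(Y,Z)\in\mathcal{L}$) gives $\mathbb{E}|Y_t|^2+\mathbb{E}\int_t^T|Z_s|^2\mathrm{d}s=\mathbb{E}|g(X)|^2+2\mathbb{E}\int_t^T Y_s^\intercal f_s(X,Y_s,Z_s)\mathrm{d}s$. I bound $\mathbb{E}|g(X)|^2\le(1+\eta)K_1^2\mathbb{E}\|X\|_{2,T}^2+C_\eta\mathbb{E}|g^0|^2$ via \eqref{Lipschitz}, expand $f$ via \eqref{LipschitzCondition}, and apply Young's inequality so that the coefficient of $\mathbb{E}\int|Z|^2$ stays as small as wished at the cost of a large but $T$-weighted multiple of $\mathbb{E}\int|Y|^2$; the $f^0$-part is absorbed into a small multiple of $\mathbb{E}\sup_t|Y_t|^2$ plus $C\,\mathbb{E}\big(\int_0^T|f^0_s|\mathrm{d}s\big)^2$, and $\mathbb{E}\sup_t|Y_t|^2$ is controlled from the BSDE itself by Doob's inequality. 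Using the $T$-weighting of the $\mathbb{E}\int|Y|^2$ and $\mathbb{E}\int\|X\|_{2,\cdot}^2$ terms and absorbing the small self-terms yields, for $T$ small, $\zeta\le(1+\varepsilon_1)K_1^2\chi+C(\mathbb{E}|g^0|^2+I_0^2)$ and $\mathbb{E}\sup_t|Y_t|^2\le C(\chi+\mathbb{E}|g^0|^2+I_0^2)$, where $\varepsilon_1\to0$ as $T$ and the auxiliary parameters $\to0$.

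\emph{Forward estimate.} I apply It\^o's formula to $|X_t|^2$ along \eqref{forward} --- crucially \emph{not} to $\mathbb{E}\sup_t|X_t|^2$ --- so that $\mathbb{E}\int_0^t|\sigma_s|^2\mathrm{d}s$ enters with coefficient exactly $1$, and then estimate $|\sigma_s|^2\le(1+\eta)|\nabla_z\sigma|_\infty^2|Z_s|^2+C_\eta\big(|\sigma^0_s|^2+K_0^2\|X\|_{2,s}^2+K_0^2|Y_s|^2\big)$. In the term $2X_s^\intercal b_s$, the Lipschitz part produces only $T$-weighted contributions together with a small multiple of $|Z_s|^2$, whereas the source term $2X_s^\intercal b^0_s$ is the delicate one: I bound $\mathbb{E}\int_0^t 2|X_s|\,|b^0_s|\mathrm{d}s$ by $\varepsilon\sup_{s}\mathbb{E}|X_s|^2+C_\varepsilon I_0^2$, using Cauchy--Schwarz in $\omega$ followed by Minkowski's integral inequality $\int_0^T(\mathbb{E}|b^0_s|^2)^{1/2}\mathrm{d}s\le\big(\mathbb{E}(\int_0^T|b^0_s|\mathrm{d}s)^2\big)^{1/2}$, which avoids the quantity $\mathbb{E}\int|b^0|^2$ that is \emph{not} controlled by $I_0^2$. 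After Gronwall's lemma in $t$ (which also disposes of the $\|X\|_{2,s}^2$ terms) this gives $\chi\le(1+\varepsilon_2)|\nabla_z\sigma|_\infty^2\,\zeta+C(|x|^2+I_0^2)+CT\big(\chi+\sup_t\mathbb{E}|Y_t|^2\big)$, with $\varepsilon_2\to0$ as the parameters $\to0$.

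\emph{Closing and main obstacle.} Substituting the backward bound for $\zeta$ into the forward bound for $\chi$ produces $\chi\big(1-(1+\varepsilon_1)(1+\varepsilon_2)K_1^2|\nabla_z\sigma|_\infty^2-CT\big)\le C(|x|^2+\mathbb{E}|g^0|^2+I_0^2)+CT\sup_t\mathbb{E}|Y_t|^2$. Since $K_1|\nabla_z\sigma|_\infty<1$ (Theorem \ref{thm_contraction}), choosing the auxiliary parameters and then $T$ small enough makes the bracket bounded away from $0$, hence $\chi\le C(|x|^2+\mathbb{E}|g^0|^2+I_0^2)+CT\sup_t\mathbb{E}|Y_t|^2$; feeding this back into the backward estimates and absorbing the remaining $T$-term yields $\chi+\sup_t\mathbb{E}|Y_t|^2+\zeta\le C(|x|^2+\mathbb{E}|g^0|^2+I_0^2)$, which is the claim because $\sup_t\mathbb{E}\big[\|X\|_{2,t}^2+|Y_t|^2+\int_t^T|Z_s|^2\mathrm{d}s\big]\le\chi+\sup_t\mathbb{E}|Y_t|^2+\zeta$. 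The main obstacle is precisely this final absorption: the contribution of $Z$ to the forward diffusion carries \emph{no} factor of $T$, so it can be absorbed only because the composed constant is arbitrarily close to $K_1^2|\nabla_z\sigma|_\infty^2<1$; getting that constant right is what forces the use of It\^o rather than Doob on $|X_t|^2$, the separate peeling of the $z$-dependence of $\sigma$ and the $\mathrm{x}$-dependence of $g$ with their own Lipschitz constants, and the Minkowski-type treatment of the $L^1$-in-time sources $b^0,f^0$. The path-dependence itself is harmless: $\|X\|_{2,s}$ only shows up where $|X_s|$ would appear in the classical case, and it is swallowed by Gronwall's lemma.
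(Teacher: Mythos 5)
Your overall architecture --- coupled forward/backward energy estimates with sharp tracking of $K_1$ and $|\nabla_z\sigma|_\infty$, closed via $K_1|\nabla_z\sigma|_\infty<1$ and smallness of $T$ --- is a legitimate alternative to the paper's argument, which is much softer: it recycles the contraction constant $C<1$ already established in the proof of Theorem \ref{thm_contraction}. Writing $(X_0,Y_0,Z_0)$ for the solution of the decoupled system obtained by freezing $(y,z)=(0,0)$, the triangle inequality gives $\|(Y,Z)\|_2\le(1-C)^{-1}\|(Y_0,Z_0)\|_2$, the fully decoupled triple $(X_0,Y_0,Z_0)$ is estimated by standard SDE/BSDE bounds, and the forward bound then follows from the SDE estimate with the already-controlled $(Y,Z)$ on the right-hand side. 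That route sidesteps all of the delicate constant-chasing you undertake.

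As written, though, your forward estimate has a genuine gap in the treatment of the source term $2X_s^\intercal b^0_s$. The inequality you invoke, $\int_0^T(\mathbb{E}|b^0_s|^2)^{1/2}\,\mathrm{d}s\le\big(\mathbb{E}\big(\int_0^T|b^0_s|\,\mathrm{d}s\big)^2\big)^{1/2}$, is the \emph{reverse} of Minkowski's integral inequality and is false in general; worse, the left-hand side need not even be finite under Assumption \ref{A1}(iii). For instance, take $b^0_s=a_k\mathbf{1}_{A_k}$ on dyadic intervals $I_k\subset[0,T]$ of length $2^{-k}$ with the $A_k$ pairwise disjoint, $\mathbb{P}(A_k)=ck^{-4}$ and $a_k=2^k k$: then $\mathbb{E}\big(\int_0^T|b^0_s|\,\mathrm{d}s\big)^2=\sum_k c\,k^{-2}<\infty$ while $\int_0^T(\mathbb{E}|b^0_s|^2)^{1/2}\,\mathrm{d}s=\sum_k \sqrt{c}\,k^{-1}=\infty$. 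So this step does not bound the cross term by $\varepsilon\sup_s\mathbb{E}|X_s|^2+C_\varepsilon I_0^2$, and the quantity it produces is not controlled by $I_0^2$. The gap is repairable without sacrificing the sharp constant you are protecting: bound $\mathbb{E}\int_0^t 2|X_s|\,|b^0_s|\,\mathrm{d}s\le\lambda\,\mathbb{E}\sup_{s\le T}|X_s|^2+\lambda^{-1}I_0^2$ pathwise by Young's inequality, and then estimate $\mathbb{E}\sup_{s}|X_s|^2$ by the Doob/BDG bound; the resulting contribution to the coefficient of $\mathbb{E}\int|Z|^2$ is $\lambda$ times a fixed constant and is made negligible by choosing $\lambda$ small, so the coefficient $1$ in front of $\mathbb{E}\int|\sigma|^2$ in your main It\^o identity for $|X_t|^2$ is preserved. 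With that correction (your routing of the $f^0$ term through $\mathbb{E}\sup_t|Y_t|^2$ is already sound), the closing argument goes through.
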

\begin{proof}[\textbf{Proof of Lemma \ref{estimateFBSDE}}]
	Let $ (y, z) $ two progressively measurable processes. Let $ (X,Y,Z) $ be the unique solution of the following decoupled FBSDE on $ [0,T] $:
	\begin{numcases}
	\text{ }\mathrm{d}X_{t} = b_{t}(X, y_{t}, z_{t})\mathrm{d}t + \sigma_{t}(X, y_{t}, z_{t})\mathrm{d}W_{t} & $ X_{0} = x $  \label{eq001} \\
	\mathrm{d}Y_{t} = -f_{t}(X, y_{t}, z_{t})\mathrm{d}t + Z_{t}\mathrm{d}W_{t} & $ Y_{T} = g(X_{\cdot\wedge T}) $. \label{eq002}
	\end{numcases}
	We have shown that the mapping $ (y,z)\mapsto(Y,Z) $ is a contraction in the proof of Theorem \ref{thm_contraction} in the space $ (\mathcal{L}, \|\cdot\|_2) $. Denote $ (X_0,Y_0,Z_0) $ the solution of the FBSDE \eqref{eq001}-\eqref{eq002} with $ (y,z) = (0,0) $.
	We have
	\begin{equation*}
	\|(Y-Y_0,Z-Z_0)\|_2 \leq C\|(Y,Z)\|_2, 
	\end{equation*}
	where $ C<1 $. By the triangle inequality, we get
	\begin{equation*}
	\|(Y,Z)\|_2 
	\leq \|(Y-Y_0,Z-Z_0)\|_2 + \|(Y_0,Z_0)\|_2\leq C\|(Y,Z)\|_2 + \|(Y_0,Z_0)\|_2,
	\end{equation*}
	and therefore, together with standard estimates on SDEs and BSDEs (see e.g. \cite[Chapter 9]{Cvitanic+Zhang}), we have
	\begin{align*}
	\|(Y,Z)\|_2 
	&\leq \frac{1}{1-C}\|(Y_0,Z_0)\|_2 \\
	&\leq C\mathbb{E}\left[ |g(X_0)|^2 + \left( \int_{0}^{T}|f(t,X_0(t),0,0)|\mathrm{d}t \right)^2 \right] \\
	&\leq C\mathbb{E}\left[ K^2_1\|X_0\|^2_{2,T} + K^2_0\int_{0}^{T}\|X_0\|^2_{2,t}\mathrm{d}t + |g(0)|^2 + \left( \int_{0}^{T}|f_t^0|\mathrm{d}t \right)^2 \right] \\
	&\leq C\mathbb{E}\left[ |x|^2 + |g(0)|^2 + \left(\int_{0}^{T}|f_t^0| + |b_t^0|\mathrm{d}t\right)^2 + \int_{0}^{T}|\sigma_t^0|^2\mathrm{d}t \right] \\
	&= C(\mathbb{E}\left[ |x|^2 + |g(0)|^2 \right] + I^2_0),
	\end{align*}
	where the constants $ C $ may vary from line to line. Now let's examine the forward process $ X $. By standard estimates on SDEs (see e.g. \cite[Chapter 9]{Cvitanic+Zhang}), we get
	\begin{align*}
	\sup_{0\leq t\leq T}\mathbb{E}[\|X\|_{2,t}] 
	&\leq C\left(|x|^2 + \mathbb{E}\left[\left(\int_{0}^{T}(|b(t,0,Y_t,Z_t)|\mathrm{d}t\right)^2 + \int_{0}^{T}|\sigma(t,0,Y_t,Z_t)|^2\mathrm{d}t \right]\right) \\
	&\leq C\left(|x|^2 + \|(Y,Z)\|^2_2 + \mathbb{E}\left[\left(\int_{0}^{T}(|b_t^0|\mathrm{d}t\right)^2 + \int_{0}^{T}|\sigma_t^0|^2\mathrm{d}t \right]\right) \\
	&\leq C(\mathbb{E}\left[ |x|^2 + |g(0)|^2 \right] + I^2_0).
	\end{align*}
	Combining the above inequalities, we get
	\begin{equation*}
	\sup_{t\in[0,T]}\Big\{\mathbb{E}\Big[\|X\|^2_{2,t} + |Y_{t}|^{2} + \int_{t}^{T}|Z_{s}|^{2}\mathrm{d}s\Big]\Big\} \leq C(\mathbb{E}\left[ |x|^2 + |g(0)|^2 \right] + I^2_0).
	\end{equation*}
\end{proof}

\begin{thm}[Stability Property of path-dependent FBSDE]\label{thm: Stability}
	Assume that $ (b,\sigma,f,g) $ and $ (b',\sigma',f',g') $ satisfy the same condition (i.e. they belong to the same case discussed in the Section 4). Let $ T $ be a time horizon on which both FBSDE have a solution, denoted respectively $ \Upxi = (X,Y,Z) $ and $ \Upxi' = (X',Y',Z') $. For $ \phi=b,\sigma,f,g $, denote $ \Delta\phi := \phi - \phi' $. Let 
	\begin{equation*}
		\Delta I^2_0 := \mathbb{E}\left[ \left(\int_{0}^{T}\left|\Delta f_t(\Upxi'_t)\right| + \left|\Delta b_t(\Upxi'_t)\right|\mathrm{d}t\right)^2 + \int_{0}^{T}\left|\Delta\sigma_t(\Upxi'_t)\right|^2\mathrm{d}t \right].
	\end{equation*}
	Then, we have
	\begin{equation*}
	\sup_{t\in[0,T]}\Big\{\mathbb{E}\Big[\|\Delta X\|^2_{2,t} + |\Delta Y_{t}|^{2} + \int_{t}^{T}|\Delta Z_{s}|^{2}\mathrm{d}s\Big]\Big\} \leq C\big(|\Delta x|^2 + \mathbb{E}\left[ |\Delta g(X')|^2 \right] + \Delta I^2_0\big).
	\end{equation*}
\end{thm}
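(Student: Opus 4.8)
The plan is to reduce the statement to an a priori estimate for the \emph{difference system}, to establish that estimate on a small horizon as a direct application of Lemma \ref{estimateFBSDE}, and then to propagate it over $[0,T]$ by patching, using the decoupling field of one of the two FBSDEs to control the intermediate terminal data.

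\textbf{Step 1 (reduction to the difference system).} Set $(\mathcal{X},\mathcal{Y},\mathcal{Z}):=(X-X',Y-Y',Z-Z')$. For $\phi=b,\sigma,f$ write $\phi_t(X,Y_t,Z_t)-\phi'_t(X',Y'_t,Z'_t)=[\phi_t(X,Y_t,Z_t)-\phi_t(X',Y'_t,Z'_t)]+\Delta\phi_t(\Upxi'_t)$ and $g(X)-g'(X')=[g(X)-g(X')]+\Delta g(X')$. Then $(\mathcal{X},\mathcal{Y},\mathcal{Z})$ solves an FBSDE of type \eqref{forward}-\eqref{backward} with coefficients $\tilde\phi_t(\mathrm{x},y,z):=\phi_t(\mathrm{x}+X',y+Y'_t,z+Z'_t)-\phi_t(X',Y'_t,Z'_t)+\Delta\phi_t(\Upxi'_t)$ and terminal datum $\tilde g(\mathrm{x}):=g(\mathrm{x}+X')-g(X')+\Delta g(X')$. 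This system again satisfies Assumption \ref{A1} with the \emph{same} Lipschitz constants $K_0,K_1$ and with $|\nabla_z\tilde\sigma|_\infty=|\nabla_z\sigma|_\infty$; its frozen coefficients are $\tilde\phi^0_t=\Delta\phi_t(\Upxi'_t)$ and $\tilde g^0=\Delta g(X')$, so its integrability quantity equals $\Delta I^2_0$ (finite by Assumption \ref{A1} and the standard a priori bounds on $\Upxi'$), its initial condition is $\Delta x$, and its $g^0$-term is $\Delta g(X')$. Thus the target inequality is exactly the conclusion of Lemma \ref{estimateFBSDE} applied to this system — provided $T$ is small; the remaining work is to go from small to arbitrary $T$.

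\textbf{Step 2 (small-time estimate and patching setup).} On any subinterval $[s,\tau]$ on which Theorem \ref{thm_contraction} applies to the difference system, Lemma \ref{estimateFBSDE} gives $\sup_{t\in[s,\tau]}\mathbb{E}[\|\mathcal{X}\|^2_{2,t}+|\mathcal{Y}_t|^2+\int_t^\tau|\mathcal{Z}_r|^2\mathrm{d}r]\le C(\mathbb{E}[\|\mathcal{X}\|^2_{2,s}]+\mathbb{E}[|(\text{terminal }g^0)|^2]+\Delta I^2_0)$, with $C$ depending only on $\tau-s$, the dimensions and the Lipschitz constants. Since both FBSDEs belong to the same structural case as in the previous section and admit a solution on $[0,T]$, each has a regular decoupling field ($u$ and $u'$) on $[0,T]$; let $K^\ast$ be a common bound for their path-Lipschitz constants (so $K^\ast|\nabla_z\sigma|_\infty<1$), and let $\bar\epsilon$ be the horizon furnished by Theorem \ref{thm_contraction} for Lipschitz constants $K_0,K^\ast$. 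Partition $0=t_0<\dots<t_N=T$ with $t_{i+1}-t_i\le\bar\epsilon$ and $N\le T/\bar\epsilon+1$.

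\textbf{Step 3 (backward induction).} Set $R:=\mathbb{E}[|\Delta g(X')|^2]+\Delta I^2_0$. By backward induction on $i$ one shows the uniform estimate $\sup_{t\in[t_i,T]}\mathbb{E}[\|\mathcal{X}\|^2_{2,t}+|\mathcal{Y}_t|^2+\int_t^T|\mathcal{Z}_r|^2\mathrm{d}r]\le C_i(\mathbb{E}[\|\mathcal{X}\|^2_{2,t_i}]+R)$, valid for the difference of any FBSDE-$(b,\sigma,f,g)$ solution (with terminal $g$ at $T$) and the fixed $\Upxi'$. For $i=N-1$ this is Step 2 with terminal datum $\tilde g$ ($K_1$-Lipschitz, $\tilde g^0=\Delta g(X')$). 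For the step on $[t_i,t_{i+1}]$, the terminal value is $\mathcal{Y}_{t_{i+1}}=u_{t_{i+1}}(X)-u'_{t_{i+1}}(X')=[u_{t_{i+1}}(X)-u_{t_{i+1}}(X')]+\Delta u_{t_{i+1}}(X')$, so the sub-problem has a $K^\ast$-Lipschitz terminal datum whose $g^0$-term is $\Delta u_{t_{i+1}}(X'):=u_{t_{i+1}}(X')-u'_{t_{i+1}}(X')$. The crucial point is that $u_{t_{i+1}}(X')$ is the time-$t_{i+1}$ value of the FBSDE-$(b,\sigma,f,g)$ solution started from the path $X'_{\cdot\wedge t_{i+1}}$, hence $\Delta u_{t_{i+1}}(X')$ is the difference at $t_{i+1}$ of two solutions of the two FBSDEs issued from the \emph{same} initial path; the induction hypothesis on $[t_{i+1},T]$ applied to that pair — for which the initial-path term vanishes — yields $\mathbb{E}[|\Delta u_{t_{i+1}}(X')|^2]\le C_{i+1}R$. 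Inserting this and the induction hypothesis on $[t_{i+1},T]$ into the Step 2 estimate on $[t_i,t_{i+1}]$, using $\int_t^T=\int_t^{t_{i+1}}+\int_{t_{i+1}}^T$ and the fact that $\mathbb{E}[\|\mathcal{X}\|^2_{2,t_{i+1}}]$ is itself dominated by that estimate, produces the bound on $[t_i,T]$ with $C_i$ a polynomial in $C_{i+1}$. As $N$ is finite and $\mathbb{E}[\|\mathcal{X}\|^2_{2,t_0}]=|\Delta x|^2$, the case $i=0$ is the claim.

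\textbf{Expected main obstacle.} The delicate point is the forward--backward coupling in Step 3: a naive iteration leaves the forward-path term $\mathbb{E}[\|\mathcal{X}\|^2_{2,t_i}]$ on both sides. It is circumvented by the observation that the terminal $g^0$-term created at each junction, $\Delta u_{t_{i+1}}(X')$, compares two solutions issued from the \emph{same} path and is therefore controlled by the source $R$ alone through the already-established tail estimate; combined with the finiteness of the number of steps this closes the induction. The path-dependence of the coefficients enters only through the norm $\|\cdot\|_{2,t}$, which was designed for this purpose, and costs only extra bookkeeping already absorbed in Lemma \ref{estimateFBSDE}.
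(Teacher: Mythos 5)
Your proposal is correct and follows essentially the same route as the paper: you rewrite the difference as a solution of an auxiliary FBSDE satisfying Assumption \ref{A1} whose coefficients frozen at zero are exactly the perturbations $\Delta\phi_t(\Upxi'_t)$ and $\Delta g(X')$, apply Lemma \ref{estimateFBSDE} on each small subinterval of a partition dictated by the common decoupling-field Lipschitz bound, and control the junction data $\Delta u_{t_{i+1}}(X')$ by comparing the two solutions issued from the \emph{same} initial path $X'_{\cdot\wedge t_{i+1}}$ --- which is precisely the paper's device. The only difference is organizational: you run a single backward induction that also absorbs the forward term $\mathbb{E}\big[\|\Delta X\|^2_{2,t_i}\big]$, whereas the paper iterates the $\Delta u$ estimate backward and the $\|\Delta X\|_{2,t_i}$ estimate forward separately before recombining; both close because the number of subintervals is finite.
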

\begin{proof}[\textbf{Proof of Theorem \ref{thm: Stability}}]
	We follow the steps of the proof of Theorem 8.1 in the paper \cite{ma2015}. Using the notation described in the Section 4, we have
	\begin{align}
	\text{}\mathrm{d}\Delta X_{t} 
	&= (b_{x}\|\Delta X\|_{2,t}+b_{y}\Delta Y_{t}+\Tr(b_{z}\Delta Z_{t}) + \Delta b_t( \Upxi'(t))\mathrm{d}t \nonumber \\
	&\qquad\qquad\qquad + (\sigma_{x}\|\Delta X\|_{2,t}+\sigma_{y}\Delta Y_{t}+\Tr(\sigma_{z}\Delta Z_{t}) + \Delta \sigma_t(\Upxi'))\mathrm{d}W_{t}, \label{forwardComparison}
	\end{align}
	\begin{equation}\label{backwardComparison}
	\mathrm{d}\Delta Y_{t} = \int_{t}^{T}(f_{x}\|\Delta X\|_{2,t} + f_{y}\Delta Y_{t}+\Tr(f_{z}\Delta Z_{t}) + \Delta f_t(\Upxi'(t))\mathrm{d}t - \int_{t}^{T}\Delta Z_{s}\mathrm{d}W_{t}, 
	\end{equation}
	with initial condition $ \Delta X_0 = x - x' $ and terminal condition $ \Delta Y_T  = g_x\|\Delta X\|_{2,t} + \Delta g(X') $. 
	
	Since both FBSDE satisfy the same condition, which means there exists $ n\in\mathbb{N} $ and $ 0= t_0<\cdots< t_n=T $ such that on each small interval $ [t_i,t_{i+1}] $, Lemma \ref{estimateFBSDE} applies to both FBSDE, which means Lemma \ref{estimateFBSDE} applies equally to the above FBSDE \eqref{forwardComparison}-\eqref{backwardComparison}. Denote $ u_1 $ and $ u_2 $ the two associated decoupling fields. We have
	\begin{align}
	&\qquad\qquad\sup_{t\in[t_i,t_{i+1}]}\Big\{\mathbb{E}\Big[\|\Delta X\|^2_{2,t} + |\Delta Y_{t}|^{2} + \int_{t}^{t_{i+1}}|\Delta Z_{s}|^{2}\mathrm{d}s\Big]\Big\} \nonumber \\
	&\leq C\mathbb{E}\Bigg[ \|\Delta X\|^2_{2,t_i} + |\Delta u(t_{i+1}, X')|^2 + \left(\int_{t_i}^{t_{i+1}}|\Delta f_t(\Upxi'(t))| + |\Delta b_t(\Upxi'(t))|\mathrm{d}t\right)^2 + \int_{t_i}^{t_{i+1}}|\Delta \sigma_t(\Upxi'(t))|^2\mathrm{d}t \Bigg] \nonumber \\
	&\leq C(\mathbb{E}\left[ \|\Delta X\|^2_{2,t_i} + |\Delta u(t_{i+1}, X')|^2 \right] + \Delta I^2_0 ). \label{iteration}
	\end{align} 
	Apply Lemma \ref{estimateFBSDE} to the above linear forward-backward equation \eqref{forwardComparison}-\eqref{backwardComparison} with initial condition $ \Delta X_{\cdot\wedge t_i}=0 $ and terminal condition $ u_{x}\|\Delta X\|_{2,t_{i+1}} + \Delta u(t_{i+1}, X') $ on $ [t_i, t_{i+1}] $, and note that the difference between the solution of equation $ (b,\sigma,f,g) $ with initial condition $ X_{t_i} = X'_{t_{i}} $ and terminal condition $ Y_{t_{i+1}} = u(t_{i+1}, X) $ and the solution of equation $ (b',\sigma',f',g') $ with initial condition $ X_{t_i} = X'_{t_{i}} $ and terminal condition $ Y_{t_{i+1}} = u'(t_{i+1}, X) $ on the interval $ [t_i, t_{i+1}] $ is exactly the very solution, we get
	\begin{equation*}
	\mathbb{E}[|\Delta u(t_i, X')|^2] = \mathbb{E}[|\Delta Y_{t_i}|^2] \leq C\Delta I^2_0 + C\mathbb{E}[|\Delta u(t_{i+1}, X')|^2].
	\end{equation*}
	
	By iteration one can show that with a larger constant $ C $, we have 
	\begin{equation*}
	\mathbb{E}[|\Delta u(t_i, X')|^2] \leq C(\mathbb{E}\left[ |\Delta g(X')|^2 \right] + \Delta I^2_0).
	\end{equation*}
	Now apply again Lemma \ref{estimateFBSDE} but on the forward equation \eqref{forwardComparison} on the interval $ [t_i,t_{i+1}] $, together with the above inequalities, we get
	\begin{align*}
	\mathbb{E}\left[\|\Delta X\|^2_{2,t_{i+1}}\right] 
	&\leq C(\mathbb{E}\left[ \|\Delta X\|^2_{2,t_i} + |\Delta u(t_{i+1}, X')|^2 \right] + \Delta I^2_0 ) \\
	&\leq C(\mathbb{E}\left[ \|\Delta X\|^2_{2,t_i} + \Delta g(X')\right] + \Delta I^2_0 ).
	\end{align*}
	By iteration one can show that with a larger constant $ C $, we have
	\begin{equation*}
	\mathbb{E}\left[\|\Delta X\|^2_{2,t_i}\right]  \leq C(\mathbb{E}\left[ |\Delta x|^2 + \Delta g(X') \right] + \Delta I^2_0 ).
	\end{equation*}
	Injecting the above inequalities into the inequality \eqref{iteration}, we get
	\begin{equation*}
	\sup_{t\in[t_i,t_{i+1}]}\Big\{\mathbb{E}\Big[\|\Delta X\|^2_{2,t} + |\Delta Y_{t}|^{2} + \int_{t}^{t_{i+1}}|\Delta Z_{s}|^{2}\mathrm{d}s\Big]\Big\} \leq C(\mathbb{E}\left[ |\Delta x|^2 + |\Delta g(X')|^2 \right] + \Delta I^2_0 ). 
	\end{equation*}
	We conclude by summing up both side from $ i=0 $ to $ i=n $.
	
\end{proof}

\section{Technical proofs} \label{S4}
\begin{proof}[\textbf{Proof of Theorem \ref{GlobleExistence}}]
		We shall follow the steps of the proof of Theorem 2.3 in \cite{ma2015} of Ma, Wu, Zhang (Detao), Zhang (Jianfeng).
		
		(Existence) Let $ 0=t_{0}<t_{1}<\cdots<t_{n}=T $ be a partition of $ [0,T] $ such that $ \forall i\in\llbracket 1,n \rrbracket, t_{i+1} - t_{i}>\delta $. On $ [t_{0}, t_{1}] $, the FBSDE with initial value $ x $ and terminal value $ u(t_{1}, X) $ has an unique solution $ (X^{t_{0},t_{1}}, Y^{t_{0},t_{1}}, Z^{t_{0},t_{1}}) $ that satisfies $ Y^{t_{0},t_{1}}_{t} = u(t,X) $. On $ [t_{1}, t_{2}] $, the FBSDE with initial value $ X^{t_{0},t_{1}}_{\wedge t_{1}} $ and terminal value $ u_{t_{2}}(X) $ has an unique solution $ (X^{t_{1},t_{2}}, Y^{t_{1},t_{2}}, Z^{t_{1},t_{2}}) $ that satisfies again $ Y^{t_{1},t_{2}}_{t} = u(t, X) $. The initial condition of $ X^{t_{1}, t_{2}} $ is $ X^{t_{0}, t_{1}} $. By patching them together we obtain an forward process $ X^{t_{0},t_{2}} $, which can be used as initial value for the FBSDE on the interval $ [t_{2}, t_{3}] $. Repeating this procedure forwardly in time $ n $ times, we get a solution on each of the interval of the partition $ 0=t_{0}<t_{0}<\cdots<t_{n}=T $. 
		
		We notice that the forward process on $ [0, T] $ has been constructed during the above procedure. We only need to prove that the pieces of the backward process can be patched together. Notice that
		\begin{equation}\label{key}
		Y^{t_{i},t_{i+1}}_{t_{i}+} = Y^{t_{i},t_{i+1}}_{t_{i}} = u(t_{i}, X) = Y^{t_{i-1},t_{i}}_{t_{i}},
		\end{equation}
		which means the backward process $ Y $ defined on each interval $ [t_{i}, t_{i+1}] $ by the above procedure is continuous. Moreover, we have $ Y_{t} = u(t, X) $ and in particular, $ Y_{T} = g(X) $. One can check easily that $ (X,Y,Z) $ verifies the FBSDE with initial condition $ X_{0} = x $ and terminal condition $ Y_{T} = g(X) $.
		
		We can check easily that $ (X, Y, Z) $ is a solution of the FBSDE with initial value $ x $ and terminal value $ u(T, X) = g(X) $.
		
		(Uniqueness) Let $ (\tilde{X},\tilde{Y},\tilde{Z}) $ be another solution of the FBSDE with the same initial and terminal condition. By the definition of decoupling field, on the interval $ [t_{n-1}, t_{n}] $, we have $ \tilde{Y}_{t} = u(t, \tilde{X}) $. This implies that $ (\tilde{X},\tilde{Y},\tilde{Z}) $ satisfies the FBSDE with initial condition $ \tilde{X}_{\cdot\wedge t_{n-2}} $ on $ [t_{n-2}, t_{n-1}] $. Therefore, $ \tilde{Y}_{t} = u(t, \tilde{X}) $ is satisfied on $ [t_{n-2}, t_{n-1}] $. Repeating this procedure backwardly in time and we get $ \tilde{Y}_{t} = u(t, \tilde{X}) $ for $ t\in[0, T] $. 
		
		On $ [t_{0}, t_{1}] $, $ (\tilde{X},\tilde{Y},\tilde{Z}) $ satisfies the FBSDE with initial condition $ x $ and terminal condition $ \tilde{Y}_{t_{1}}=u(t_{1},\tilde{X}) $, by the uniqueness of solution, $ (X_{t}, Y_{t}, Z_{t}) = (\tilde{X}_{t}, \tilde{Y}_{t}, \tilde{Z}_{t}) $ on $ [t_{0}, t_{1}] $. In particular, the FBSDE on $ [t_{1}, t_{2}] $ has the same initial condition for $ X $ and $ \tilde{X} $. Repeating the arguments forwardly in time and we can see that $ (X_{t}, Y_{t}, Z_{t}) = (\tilde{X}_{t}, \tilde{Y}_{t}, \tilde{Z}_{t}) $ on $ [0,T] $. 	
\end{proof}

\begin{proof}[\textbf{Proof of Theorem \ref{thm_contraction}}]
	Let $ (y, z)\in\mathbb{H}^2_T\times\mathbb{H}^2_T $. Let $ (X,Y,Z) $ be the unique solution of the following decoupled FBSDE:
	\begin{numcases}
	\text{ }\mathrm{d}X_{t} = b_{t}(X, y_{t}, z_{t})\mathrm{d}t + \sigma_{t}(X, y_{t}, z_{t})\mathrm{d}W_{t} & $ X_{0} = x $ \label{decoupledForward} \\
	\mathrm{d}Y_{t} = -f_{t}(X, y_{t}, z_{t})\mathrm{d}t + Z_{t}\mathrm{d}W_{t} & $ Y_{T} = g(X_{\cdot\wedge T}) $.\label{decoupledBackward}
	\end{numcases}
	We can then define the following mapping $ (y,z)\in\mathbb{H}^2_T\times\mathbb{H}^2_T\mapsto(Y,Z)\in\mathbb{H}^2_T\times\mathbb{H}^2_T $. Our goal is to show that this mapping is a contraction for some norm that we shall define later. First of all, we notice that if the mapping is indeed a contraction, then the fixed point of the mapping $ (y,z) $ and the corresponding forward process $ X $ defined by the equation\eqref{decoupledForward} are a solution of the FBSDE \eqref{forward}-\eqref{backward}. Conversely, if $ (X,Y,Z) $ is a solution of the FBSDE \eqref{forward}-\eqref{backward}, then $ (Y,Z) $ is a fixed point of the mapping we define above.
	
	Let $ (y,z) $ and $ (y',z') $ be two pairs of progressively measurable processes and let $ (X,Y,Z) $ and $ (X',Y',Z') $ be the corresponding solutions of the above decoupled FBSDE \eqref{decoupledForward}-\eqref{decoupledBackward}. 
	
	Denote $ \Delta \alpha := \alpha - \alpha' $ for $ \alpha = y,z,X,Y,Z $ and denote 
	\begin{equation*}
		\Delta_{x}\xi_{t} := \xi_{t}(X,Y_t,Z_t) - \xi_t(X',Y_t,Z_t)
	\end{equation*}
	\begin{equation*}
		\Delta_{y}\xi_{t} := \xi_{t}(X',Y,Z) - \xi_t(X',Y',Z)
	\end{equation*}
	\begin{equation*}
		\Delta_{z}\xi_{t} := \xi_{t}(X',Y',Z) - \xi_t(X',Y',Z'),
	\end{equation*}
	for $ \xi = b, \sigma, f $. Clearly,
	\begin{equation*}
	\Delta X_{t} = \int_{0}^{t}(\Delta_{x}b_{s}+\Delta_{y}b_{s}+\Delta_{z}b_{s})\mathrm{d}s + \int_{0}^{t}(\Delta_{x}\sigma_{s}+\Delta_{y}\sigma_{s}+\Delta_{z}\sigma_{s})\mathrm{d}W_{s}.
	\end{equation*}
	
	By Ito's formula, we get
	\begin{equation*}
	\mathbb{E}[|\Delta X_{t}|^{2}] = \mathbb{E}\Big[\int_{0}^{t}2\Delta X_{s}(\Delta_{x}b_{s}+\Delta_{y}b_{s}+\Delta_{z}b_{s})\mathrm{d}s + \int_{0}^{t}\big|\Delta_{x}\sigma_{s}+\Delta_{y}\sigma_{s}+\Delta_{z}\sigma_{s}\big|^{2}\mathrm{d}s\Big].
	\end{equation*}
	
	By Cauchy-Schwarz Inequality and the inequality \eqref{LipschitzCondition}, we get
	\begin{equation*}
	2\Delta X_{s}\cdot \Delta_{x}b_{s}
	\leq 2K_{0}|\Delta X_{s}| \|\Delta X\|_{2,s} 
	\leq K_{0}\left(2|X_{s}|^{2} + \int_{0}^{t}|\Delta X_{s}|^{2}\mathrm{d}s\right);
	\end{equation*}
	
	\begin{equation*}
	2\Delta X_{s}\cdot\Delta_{y}b_{s}
	\leq 2K_{0}|\Delta X_{s}||\Delta y_{s}| 
	\leq K_{0}(|\Delta X_{s}|^{2} + |\Delta y_{s}|^{2});
	\end{equation*}
	
	\begin{equation*}
	2\Delta X_{s}\cdot\Delta_{z}b_{s}
	\leq 2K_{0}|\Delta X_{s}||\Delta z_{s}| 
	\leq K_{0}\left(\frac{|\Delta X_{s}|^{2}}{\epsilon} + \epsilon|\Delta z_{s}|^{2}\right).
	\end{equation*}
	Combing the above inequalities, we get
	\begin{equation}\label{Ineq1}
	\int_{0}^{t}2\Delta X_{s}(\Delta_{x}b_{s}+\Delta_{y}b_{s}+\Delta_{z}b_{s})\mathrm{d}s \leq \int_{0}^{t}K_{0}((3+t+\epsilon^{-1})|\Delta X_{s}|^{2}+|\Delta y_{s}|^{2}+\epsilon|\Delta z_{s}|^{2})\mathrm{d}s.
	\end{equation}
	
	Using Minkowski inequality and arithmetic-geometric inequality, we get
	\begin{align}
	&\qquad |\Delta_{x}\sigma_{s}+\Delta_{y}\sigma_{s}+\Delta_{z}\sigma_{s}|^{2} \nonumber \\
	&\leq (K_{0}\|\Delta X\|_{2,s} + K_{0}|\Delta y_{s}| + |\nabla_z\sigma|_\infty|\Delta z_{s}|)^{2} \nonumber \\
	&\leq 2K^{2}_{0}\left(1+\frac{K_{0}}{\epsilon}\right)(\|\Delta X\|^{2}_{2,s} + |\Delta y_{s}|^{2}) + (|\nabla_z\sigma|_\infty^2 + K_{0}\epsilon)|\Delta z_{s}|^{2} \nonumber \\
	&\leq 2K^{2}_{0}\left(1+\frac{K_{0}}{\epsilon}\right)(|\Delta X_{s}|^{2} + \int_{0}^{t}|\Delta X_{s}|^{2}\mathrm{d}s + |\Delta y_{s}|^{2}) + (|\nabla_z\sigma|_\infty^2 + K_{0}\epsilon)|\Delta z_{s}|^{2}. \label{Ineq2}
	\end{align}	
	
	Combining the inequality \eqref{Ineq1} and \eqref{Ineq2}, we get
	\begin{align*}
	&\qquad \mathbb{E}\Big[\int_{0}^{t}2\Delta X_{s}(\Delta_{x}b_{s}+\Delta_{y}b_{s}+\Delta_{z}b_{s})\mathrm{d}s + \int_{0}^{t}\big|\Delta_{x}\sigma_{s}+\Delta_{y}\sigma_{s}+\Delta_{z}\sigma_{s}\big|^{2}\mathrm{d}s\Big] \\
	&\leq \int_{0}^{t}C_{\epsilon}(|\Delta X_{s}|^{2}+|\Delta y_{s}|^{2})+(2K_{0}\epsilon + |\nabla_z\sigma|_\infty^2)|\Delta z_{s}|^{2}\mathrm{d}s,
	\end{align*}
	where
	\begin{equation*}
	C_{\epsilon} := 2K^{2}_{0}\left(1+\frac{K_{0}}{\epsilon}\right)(1+T) + K_{0}(3+T+\epsilon^{-1}).
	\end{equation*}
	
	By Gronwall Inequality, we get
	\begin{align*}
	\mathbb{E}\big[|\Delta X_{t}|^{2}\big] &\leq \mathbb{E}\Big[e^{C_{\epsilon}t}\int_{0}^{t}(C_{\epsilon}|\Delta y_{s}|^{2} + (2K_{0}\epsilon + |\nabla_z\sigma|_\infty^2)|\Delta z_{s}|^{2})\mathrm{d}s \Big] \\
	&\leq \mathbb{E}\Big[e^{C_{\epsilon}T}\int_{0}^{T}(C_{\epsilon}|\Delta y_{s}|^{2} + (2K_{0}\epsilon + |\nabla_z\sigma|_\infty^2)|\Delta z_{s}|^{2})\mathrm{d}s \Big].
	\end{align*}
	Again by applying the Ito's Formula to the process $ |\Delta Y_{t}|^{2} $, we get
	\begin{equation}\label{Y_equation}
	\mathbb{E}\big[|\Delta Y_{t}|^{2} + \int_{t}^{T}|\Delta Z_{s}|^{2}\big] = |\Delta Y_{T}|^{2} + \mathbb{E}\Big[\int_{t}^{T}2\Delta Y_{s}(\Delta_{x}f_{s} + \Delta_{y}f_{s} + \Delta_{z}f_{s})\mathrm{d}s\Big].
	\end{equation}
	By Cauchy-Schwarz inequality and arithmetic-geometric inequality, we get
	\begin{align*}
	\qquad 2\Delta Y_{s}(\Delta_{x}f_{s} + \Delta_{y}f_{s} + \Delta_{z}f_{s}) 
	&\leq 2K_{0}|\Delta Y_{s}|(\|\Delta X\|_{2,s} + |\Delta y_{s}| + |\Delta z_{s}|) \\
	&\leq K_{0}\big( (2+\epsilon^{-1})|\Delta Y_{s}|^{2} + \|\Delta X\|^{2}_{2,s} + |\Delta y_{s}|^{2} + \epsilon|\Delta z_{s}|^{2} \big).
	\end{align*}
	
	Combining equation \eqref{Y_equation} and the above inequality, we get
	\begin{align}
	&\qquad\mathbb{E}\big[|\Delta Y_{t}|^{2} + \int_{t}^{T}|\Delta Z_{s}|^{2}\big] \nonumber	\\
	&\leq |\Delta Y_{T}|^{2} + K_{0}\mathbb{E}\Big[\int_{t}^{T}(2+\epsilon^{-1})|\Delta Y_{s}|^{2}+\epsilon|\Delta z_{s}|^{2}+|\Delta y_{s}|^{2} \mathrm{d}s+ (T+1)\int_{0}^{T}|\Delta X_{s}|^{2}\mathrm{d}s \Big] \nonumber \\	
	&\leq |\Delta Y_{T}|^{2} + \mathbb{E}\Big[\int_{t}^{T}\tilde{C}_{\epsilon}|\Delta Y_{s}|^{2}\mathrm{d}s + \int_{0}^{T}K_{0}(\epsilon+T(T+1)e^{C_{\epsilon}T}(2K_{0}\epsilon+|\nabla_z\sigma|_\infty^2))|\Delta z_{s}|^{2} \nonumber \\
	&\qquad +(K_{0}+T(T+1)K_{0}e^{C_{\epsilon}T}C_{\epsilon})|\Delta y_{s}|^{2}\mathrm{d}s\Big] \nonumber \\		
	&\leq \mathbb{E}\Big[K^{2}_{1}\int_{0}^{T}|\Delta X_{t}|^{2}\mathrm{d}t + K^{2}_{1}|\Delta X_{T}|^{2}\Big] + \mathbb{E}\Big[\int_{t}^{T}\tilde{C}_{\epsilon}|\Delta Y_{s}|^{2}\mathrm{d}s \nonumber \\
	&\qquad + \int_{0}^{T}K_{0}(\epsilon+T(T+1)e^{C_{\epsilon}T}(2K_{0}\epsilon+|\nabla_z\sigma|_\infty^2))|\Delta z_{s}|^{2}+(K_{0}+T(T+1)K_{0}e^{C_{\epsilon}T}C_{\epsilon})|\Delta y_{s}|^{2}\mathrm{d}s\Big] \nonumber \\		
	&\leq \mathbb{E}\Big[\int_{0}^{T}(T+1)K^{2}_{1}e^{C_{\epsilon}T}(C_{\epsilon}|\Delta y_{s}|^{2} + (2K_{0}\epsilon + |\nabla_z\sigma|_\infty^2)|\Delta z_{s}|^{2})\mathrm{d}s + \int_{t}^{T}\tilde{C}_{\epsilon}|\Delta Y_{s}|^{2}\mathrm{d}s \nonumber \\
	&\qquad + \int_{0}^{T}K_{0}e^{C_{\epsilon}T}(\epsilon+T(T+1)(2K_{0}\epsilon+|\nabla_z\sigma|_\infty^2))|\Delta z_{s}|^{2} + (K_{0}+T(T+1)K_{0}e^{C_{\epsilon}T}C_{\epsilon})|\Delta y_{s}|^{2}\mathrm{d}s\Big] \nonumber \\		
	&\leq \mathbb{E}\Big[\int_{t}^{T}\tilde{C}_{\epsilon}|\Delta Y_{s}|^{2} + \int_{0}^{T}C_{y}(T,\epsilon)|\Delta y_{s}|^{2} + C_{z}(T, \epsilon)|\Delta z_{s}|^{2}\mathrm{d}s\Big]. \label{eq1}
	\end{align} 
	In the last line of the above inequalities, the constants $ \tilde{C}_{\epsilon} $, $ C_{y}(T,\epsilon) $ and $ C_{z}(T, \epsilon) $ are given by
	\begin{equation*}
	\tilde{C}_{\epsilon} := K_{0}(2+\epsilon^{-1})\text{, }
	C_{y}(T,\epsilon) := (T+1)K^{2}_{1}e^{C_{\epsilon}T}C_{\epsilon}+K_{0}+T(T+1)K_{0}e^{C_{\epsilon}T}C_{\epsilon}
	\end{equation*}
	and
	\begin{equation*}
	C_{z}(T,\epsilon) := (T+1)K^{2}_{1}e^{C_{\epsilon}T}(2K_{0}\epsilon + |\nabla_z\sigma|_\infty^2) + K_{0}(\epsilon+T(T+1)e^{C_{\epsilon}T}(3K_{0}\epsilon+|\nabla_z\sigma|_\infty^2)).
	\end{equation*}	
	Using Gronwall's inequality on $ Y $, we get
	\begin{align}
	\mathbb{E}\big[|\Delta Y_{t}|^{2}\big]
	&\leq\mathbb{E}\Big[e^{\tilde{C}_{\epsilon}(T-t)}\int_{0}^{T}C_{y}(T,\epsilon)|\Delta y_{s}|^{2} + C_{z}(T, \epsilon)|\Delta z_{s}|^{2}\mathrm{d}s\Big] \nonumber \\
	&\leq\mathbb{E}\Big[e^{\tilde{C}_{\epsilon}T}\int_{0}^{T}C_{y}(T,\epsilon)|\Delta y_{s}|^{2} + C_{z}(T, \epsilon)|\Delta z_{s}|^{2}\mathrm{d}s\Big]. \label{eq2}
	\end{align}
	Plug the inequality\eqref{eq2} into the inequality\eqref{eq1}, we get, for all $ t\in[0, T]: $
	\begin{align*}
	\mathbb{E}\big[|\Delta Y_{t}|^{2} + \int_{t}^{T}|\Delta Z_{s}|^{2}\big] 
	&\leq\mathbb{E}\Big[(\tilde{C}_{\epsilon}Te^{\tilde{C}_{\epsilon}T}+1)\int_{0}^{T}C_{y}(T,\epsilon)|\Delta y_{s}|^{2} + C_{z}(T, \epsilon)|\Delta z_{s}|^{2}\mathrm{d}s\Big] \\
	&\leq \Big(T(\tilde{C}_{\epsilon}Te^{\tilde{C}_{\epsilon}T}+1)C_{y}(T,\epsilon)+ (\tilde{C}_{\epsilon}Te^{\tilde{C}_{\epsilon}T}+1)C_{z}(T, \epsilon)\Big) \\
	&\qquad \times\sup_{t\in[0,T]}\Big\{\mathbb{E}\Big[|\Delta y_{t}|^{2} + \int_{t}^{T}|\Delta z_{s}|^{2}\mathrm{d}s\Big]\Big\}.
	\end{align*}
	Denote $ \gamma(\epsilon, T):= T(\tilde{C}_{\epsilon}Te^{\tilde{C}_{\epsilon}T}+1)C_{y}(T,\epsilon)+ (\tilde{C}_{\epsilon}Te^{\tilde{C}_{\epsilon}T}+1)C_{z}(T, \epsilon) $ and
	\begin{equation*}
	\|(y,z)\|^2_2 := \sup_{t\in[0,T]}\Big\{\mathbb{E}\Big[|y_{t}|^{2} + \int_{t}^{T}|z_{s}|^{2}\mathrm{d}s\Big]\Big\}.
	\end{equation*}
	For a fixed $ \epsilon>0 $, $ \gamma(\epsilon, T) $ converges to $ K^{2}_{1}(2K_{0}\epsilon+|\nabla_z\sigma|_\infty^2) + K_{0}\epsilon $ while $ T\to0 $. Since $ K_{1}|\nabla_z\sigma|_\infty<1 $, we can find a pair of $ \epsilon $ and $ T $ small enough such that $ \gamma(\epsilon, T)<1 $, in which case, the mapping $ (y,z)\mapsto(Y,Z) $ is a contraction. Denote $ \mathcal{L} $ the space of all $ \mathbb{F} $-adapted processes $ (Y,Z) $ such that $ \|(Y,Z)\|_2<+\infty $. We can show easily that the space $ (\mathcal{L}, \|\cdot\|_2) $ is a Banach space, by the contraction mapping theorem, the mapping $ (y,z)\mapsto(Y,Z) $ has an unique fixed point $ (Y,Z) $.
\end{proof}

\bibliographystyle{plain}
\bibliography{Bibliographie} 

\end{document}